\documentclass[11pt]{amsart}
\usepackage[utf8]{inputenc}
\usepackage[pdftex]{graphicx}
\usepackage{amssymb}
\usepackage{amsmath}
\usepackage{amsfonts}
\usepackage{amsthm}
\usepackage{bm}
\usepackage{mathrsfs}
\usepackage{appendix}
\usepackage{enumerate}
\usepackage[left=2.5cm, right=2.5cm, bottom=3cm, top=3cm]{geometry}
\usepackage[stretch=30,shrink=30]{microtype}
\usepackage{stmaryrd}
\usepackage[percent]{overpic}

\usepackage[normalem]{ulem} 
\usepackage{mathtools}
\usepackage{todonotes}
\usepackage{pgf,tikz}
\usepackage{pgfplots}
\usetikzlibrary{decorations.markings}
\usetikzlibrary{arrows}

\usepackage{xcolor}
\definecolor{antiquefuchsia}{rgb}{0.57, 0.36, 0.51}
\definecolor{azure}{rgb}{0.0, 0.5, 1.0}
\newcommand{\fr}{\hfill$\blacksquare$}

\usepackage[colorlinks = true, linkcolor = black, citecolor = antiquefuchsia]{hyperref}

\makeatother

\numberwithin{equation}{section}

\newtheorem{theorem}{Theorem}[section]
\newtheorem{lemma}[theorem]{Lemma}
\newtheorem{proposition}[theorem]{Proposition}
\newtheorem{corollary}[theorem]{Corollary}

\newtheorem{definition}[theorem]{Definition}
\theoremstyle{remark}
\newtheorem{remark}[theorem]{Remark}

\newcommand{\N}{\mathbb{N}}
\newcommand{\Z}{\mathbb{Z}}
\newcommand{\R}{\mathbb{R}}

\newcommand{\mres}{\mathbin{\vrule height 1.6ex depth 0pt width
0.13ex\vrule height 0.13ex depth 0pt width 1.3ex}}

\newcommand{\T}{{\mathbf T}}

\DeclareMathOperator{\dist}{dist}
\DeclareMathOperator{\Per}{Per}

\renewcommand{\d}{{\rm d}}
\newcommand{\HH}{{\mathcal H}}
\newcommand{\nchi}{{\raise.3ex\hbox{$\chi$}}}

\renewcommand{\phi}{\varphi}
\newcommand{\eps}{\varepsilon}

\def\Z{\ensuremath\mathbb{Z}}

\usepackage[capitalise,nameinlink]{cleveref}
\crefformat{equation}{#2(#1)#3}
\crefrangeformat{equation}{#3(#1)#4 to~#5(#2)#6}
\crefmultiformat{equation}{#2(#1)#3}{ and~#2(#1)#3}{, #2(#1)#3}{ and~#2(#1)#3}

\newcommand{\abs}[1]{\left\lvert #1 \right\rvert}
\newcommand{\Abs}[1]{\left\| #1 \right\|}
\newcommand{\ENCLOSE}[1]{\left\{ #1 \right\}}

\title[]{Anisotropic isoperimetric double tilings of the plane}

\author[]{Francesco Nobili} 
\address{Università di Napoli Federico II, Dipartimento di Matematica, Via Cintia, Monte S. Angelo, 80126 Napoli (NA), Italy}
\email{\url{francesco.nobili@unina.it}}
\author[]{Matteo Novaga} 
\address{Università di Pisa, Dipartimento di Matematica, Largo Bruno Pontecorvo 5,
56127 Pisa, Italy}
\email{\url{matteo.novaga@unipi.it}}
\author[]{Emanuele Paolini} 
\address{Università di Pisa, Dipartimento di Matematica, Largo Bruno Pontecorvo 5,
56127 Pisa, Italy}
\email{\url{emanuele.paolini@unipi.it}}

\date{\today. \\
MSC(2020): 49Q05, 52C20  (primary),  58E12 (secondary). \\ 
\emph{Keywords}: Tilings, Anisotropic Perimeter, Isoperimetric Profile.}  
 
\begin{document}
\begin{abstract}
We study periodic partitions of the plane into two distinct 
cells minimizing the anisotropic $\ell_1$-perimeter. For a rectangular 
lattice $G$, we compute explicitly
the $(G,\ell_1)$-isoperimetric profile and classify all minimizers.
When one cell has small area, the optimal tiling is 
generated by a square and a chipped rectangle, 
while in the remaining regime
the tiling is generated by two adjacent rectangles. 
Further minimizing over all possible planar 
lattices, we show that the $\ell_1$-isoperimetric profile 
is attained by the Pythagorean double tiling 
of two axis-aligned squares sharing a vertex. 
This configuration is unique
unless the two cells are assigned the same area.
Finally, we prove that the limiting (non periodic) partitions obtained by sending one volume to infinity are locally $\ell_1$-isoperimetric.
\end{abstract}

\maketitle
\setcounter{tocdepth}{1}
\tableofcontents

\section{Introduction} 
The study of configurations with minimal interface energy is a longstanding research program in the calculus of variations and geometric measure theory, with applications ranging from material science, crystallography, and physics. Depending on the formulation and on the concept of interface energy, the natural targets of these theoretical investigations are isoperimetric-type inequalities with geometric and physical flavour. An important goal is typically the understanding and characterization of minimal-energy configurations.

A relevant example from discrete geometry is the so-called Kelvin problem (\cite{Kelvin1887}), which consists of finding a partition of the Euclidean space $\R^d$, for $d\ge 2$, into cells of equal volume so that the interface area is minimal. Except for the planar case $d=2$, where the problem takes the name of the Honeycomb conjecture and was finally solved in \cite{Hales01} (see also \cite{Fejes1,Fejer2,Fejer3,Fejer4,MorganChristopherGreenleaf1988,Morgan1999}), little is currently known for $d>2$ (cf.\ \cite{WeairePhelan1996}).

Another source of examples from geometric measure theory is the characterization of minimal clusters (\cite{Almgren76}). The central objects are multi-isoperimetric inequalities and the characterization of minimal clusters, a problem going under the name of the $k$-bubble problem. A $k$-bubble in $\R^d$ with $k\le d+1$ is a cluster with $k$ distinguished chambers with prescribed volumes that is conjectured to have minimal interface area (\cite{SullivanMorgan96}). Currently, several advances have been obtained in the series of works \cite{FoisyAlfaroBrockHodges93,CicaleseLeonardiMaggi17,MorganWichiramala02,DorffLawlorSampsonWilson09,HassHutchingsSchlafly95,HassSchlafly00,HutchingsMorganRitore02,Reichardt08,Wichiramala,PaoliniTamagnini18,PaoliniTamagnini18bis,PaoliniTortorelli20,MilmanNeeman22,MilmanNeeman24}, depending on the dimension $d$, the number of chambers $k$, and the prescribed volumes. We refer to \cite{MorganBOOK} and the survey \cite{Milman_ICM} for more insights and references.

For our goals, we recall that these geometric problems have also been investigated with anisotropic surface energies, namely with a different notion of interface energy that is often motivated by the study of material science and crystallography (\cite{Wulff01,Taylor78,Herring51}). Given a norm $\phi$ in $\R^d$, we recall the concept of $\phi$-surface energy, also called anisotropic perimeter, which is defined for each Borel set $E\subset \R^d$ as
\begin{equation}
    \Per_{\varphi} (E)\coloneqq \int_{\partial^*E}\phi (\nu_E(x))\,\d \HH^{d-1}(x).
\label{eq:per intro}
\end{equation}
We refer to Section \ref{sec:prelim} for the notations and concepts appearing in the above formula, and to the books \cite{AmbrosioFuscoPallarabook,Maggi12_Book} for a detailed introduction. It is well known that, in this scenario, the regularity theory for perimeter or cluster minimizers is different from the standard one; see \cite{FranceschiPratelliStefani23_anisotropic} for a thorough investigation.

For instance, double bubbles have also been investigated in this anisotropic scenario for the so-called Manhattan norm, namely $\phi(x) = \|x\|_{\ell_1}= \sum_i |x_i|$. We recall the work of \cite{MorganChristopherGreenleaf1988}, who initiated the study of Wulff planar clusters, and also the recent advances \cite{FriedrichGornyStefanelli24_discrete,FriedrichGornyStefanelli24,FriedrichGornyStefanelli25} up to three dimensions, and novel geometric settings \cite{FranceschiStefani19}.

\medskip 
In this work, we are interested in the study of periodic partitions of the plane with two distinct repeating cells, minimizing the total $\ell_1$-anisotropic length. Periodic isoperimetric problems for the standard isotropic area have been commented on in \cite{Morgan1999} and also numerically tested in \cite{FortesTeixeira01,FortesGranerTeixeira02}. Furthermore, these theoretical investigations align with both the Kelvin problem and the study of minimal clusters in terms of methodology and geometric flavour. Finally, as we are going to see in the next set-up paragraph, our goals are motivated by a recent series of works \cite{CesaroniNovaga22,NovagaPaoliniStepanovTortorelli22,NovagaPaoliniStepanovTortorelli23,CesaroniNovaga23_1,CesaroniFragalaNovaga23} aiming at studying isoperimetric periodic partitions of the Euclidean space, and its recent generalization with unequal cells in \cite{NobiliNovaga24,NobiliNovagaPaolini26}.

\medskip 

\noindent\textbf{Theoretical set-up}.
We consider $d$-dimensional lattices $G$, namely groups of translations in the Euclidean space $\R^d$. The fundamental domain of a lattice $G$ is a Borel set $D\subset\R^d$ satisfying
\[
    \abs{\R^d \setminus \bigcup_{g\in G} (D+g)}=0\qquad
    \abs{(D+g)\cap D}=0,\quad  \forall g\in G\setminus\ENCLOSE{0}.
\]
Since the volume of the fundamental domain only depends on the group $G$, we will denote the volume of the lattice $G$ by $|G|\coloneqq |D|$ for any (hence all) fundamental domains $D$. In the planar case, we will write ${\rm Area}(G)=|G|$. Given a lattice $G$ with ${\rm Area}(G)>0$, there are vectors $\{g_1,\dots, g_d\} \subset G$ called generators which are independent vectors and such that, for every $g\in G$, we have $g=\sum_{k=1}^d c_k g_k$ with $c_k\in \Z$. A canonical choice of fundamental domain is the parallelepiped spanned by the $g_i$'s, namely $D=\{\sum_{k=1}^d c_k g_k \colon c_k\in [0,1]\}$, such that $|D|=\abs{\det(g_1,\dots,g_d)}$.

A tiling in $\R^d$ is a collection $\T$ of Borel sets such that $\abs{E\cap F}=0$ for all $E,F\in \T$, $E\neq F$, and
$\abs {\R^d\setminus\bigcup_{E\in \T} E}=0$. A periodic $N$-tiling, for $N\in\N$, is a tiling $\mathbf T$ such that there are $N$ Borel sets $E_1,\dots,E_N$ called generators and a lattice $G$ such that we have
\[
\mathbf T = \{ E_i + g : i=1,\dots,N, g \in G\}.
\]
We shall shortly say that $\mathbf T$ is an $N$-tiling relative to $G$ generated by $E_1,\dots,E_N$. Moreover, as customary, we shall consider tilings up to Borel representatives and say that $\mathbf T$ is generated by $\tilde E_1,\dots,\tilde E_N$ also if $|E_i \triangle \tilde E_i|=0$ for $i=1,\dots,N$. We call $\mathbf T$ non-degenerate provided $|E_i|>0$ for every $i=1,\dots,N$. Given a norm $\phi \colon \R^d\to[0,\infty)$, we define the $\phi$-perimeter of a tiling $\mathbf T$ as
\[
\Per_\phi(\mathbf T) \coloneqq \frac 12\sum_{i=1}^N\Per_\phi(E_i),
\]
where $\Per_\phi(E)$ is the anisotropic $\phi$-perimeter of a Borel set $E\subset \R^d$ (cf.\ \eqref{eq:per intro}). A periodic $N$-tiling
$\mathbf T = \{E_i+g\colon i=1,\dots,N, g\in G\}$
is called $\phi$-isoperimetric relative to a lattice $G$ provided that for any other $N$-tiling
$\mathbf T'=\{E'_i+g\colon i=1,\dots,N, g\in G\}$
with $\abs{E_i}=\abs{E_i'}$, $i=1,\dots,N$, one has
\begin{equation}
\Per_\varphi(\mathbf T) \le \Per_\varphi(\mathbf T').\label{eq:isoperimetric rel G}
\end{equation}
In this note, we are interested in classifying $\phi$-isoperimetric double tilings relative to a lattice $G$ of the plane ($N=2$ and $d=2$). This can be seen as the extension of the case of $N=1$ cells, in which case the isoperimetric problem is equivalent to that of isoperimetric fundamental domains, initiated by \cite{Choe89} and subsequently studied in the series of works \cite{MartelliNovagaPludaRiolo2017,CesaroniNovaga22,NovagaPaoliniStepanovTortorelli22,CesaroniNovaga23_1,CesaroniFragalaNovaga23} dealing with different types of local and non-local perimeter functionals.

\medskip

\noindent\textbf{Main results}. We next state our main result on anisotropic isoperimetric double tilings of the plane with the so-called Manhattan anisotropy given by
\[
    \phi(x,y) = \Abs{(x,y)}_{\ell_1}= \abs{x} + \abs{y}.
\]
We shall also consider rectangular lattices $G$ generated by the vectors $g_1=(a,0),g_2=(0,b)$ for $a,b>0$ so that ${\rm Area}(G)=ab$. By scaling, it will be enough to suppose that $ab=1$. A key notion for our goals is that of the $(G,\ell_1)$-isoperimetric profile function defined by
\[
  \mathcal I_{G,\ell_1}(x):= \inf \left\{\Per_{\ell_1}(\mathbf T) \colon \mathbf{T} = \{ E_i +g \colon i=1,2,\ g\in G\}\text{ with } |E_1|=x,\ |E_2|=1-x\right\},
\]
for all $x\in[0,1]$. Observe that double tilings $\mathbf{T}$ minimizing $\mathcal I_{G,\ell_1}(x)$, whose existence is reported for completeness in Theorem \ref{thm:existence} below, are precisely $\ell_1$-isoperimetric double tilings relative to $G$, according to \eqref{eq:isoperimetric rel G}.

Our first main result is the explicit characterization of the isoperimetric profile together with the classification of its minimizers.
\begin{theorem}\label{thm:main result}
Let $G$ be the rectangular lattice with ${\rm Area}(G)=1$. Let us denote by $(a,0),(0,b)$ the generators of $G$ so that $ab=1$, and let us assume that $a\ge b$. Then, it holds
\[ 
   \mathcal I_{G,\ell_1}(x) =
  \begin{cases}
    2\sqrt{x} + a + b &\text{if }x \le \frac{b^2}{4},\\
    a + 2b &\text{if }  \frac{b^2}{4} < x \le \frac 12,\\
    \mathcal I_{G,\ell_1}(1-x),&\text{otherwise.}
  \end{cases}
\]
In particular, $[0,1]\ni x \mapsto \left(\mathcal I_{G,\ell_1}(x) -\mathcal I_{G,\ell_1}(0)\right)^2$ is concave. Furthermore, it holds:
\begin{itemize}
    \item[{\rm o)}] if $x=0$, then $\mathcal I_{G,\ell_1}(0)$ is minimized by the $1$-tiling generated by $[0,a]\times[0,b]$;
    \item[{\rm i)}] if $0<x \le \frac{b^2}{4}$, then $\mathcal I_{G,\ell_1}(x)$ is minimized by all the double tilings generated by an axis-aligned square and a chipped rectangle (see Figure \ref{fig:small quadratino} and Figure \ref{fig:uniqueness case 1});
    \item[{\rm ii)}] if $\frac{b^2}{4} < x \le \frac 12$, then $\mathcal I_{G,\ell_1}(x)$ is minimized by the double tilings generated by two adjacent rectangles (see Figure \ref{fig:strip}).
\end{itemize}
Finally, the minimizers described above are the only possible minimizers, up to translations. 
\end{theorem}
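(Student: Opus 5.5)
The plan is to prove matching upper and lower bounds, and then characterise the equality cases. By the symmetry $x\leftrightarrow 1-x$ (swapping the two cells), it suffices to treat $x\le \frac12$.

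\emph{Upper bound.} We exhibit competitors.
\begin{itemize}
\item For $x\le b^2/4$, take the square $Q=[0,\sqrt x]^2$ placed in a corner of the rectangle $R=[0,a]\times[0,b]$, and let the second cell be the chipped rectangle $R\setminus Q$. Then $\Per_{\ell_1}(Q)=4\sqrt x$ and $\Per_{\ell_1}(R\setminus Q)=2(a+b)$. Half of their sum is $2\sqrt x+a+b$.
\item For $x\le\frac12$, take the two horizontal strips $[0,a]\times[0,xb]$ and $[0,a]\times[xb,b]$. The interfaces consist of two horizontal segments of length $a$ and one vertical segment of length $b$ per period, which gives $a+2b$. (Using vertical strips would give $2a+b\ge a+2b$.)
\item The threshold comes from comparing the two values: $2\sqrt x+a+b\le a+2b$ if and only if $x\le b^2/4$.
\end{itemize}

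\emph{Lower bound.} Pass to the torus $\mathbb T=\R^2/G$. A double tiling becomes a partition of $\mathbb T$ into $E$ and $F$ of areas $x$ and $1-x$, and $\Per_\ell(\mathbf T)$ equals the $\ell_1$-length of the interface on $\mathbb T$. The existence result (Theorem \ref{thm:existence}) gives minimizers. Writing the $\ell_1$-length as the sum of the horizontal and vertical projections (Cauchy--Crofton for $\ell_1$), the interface length is
\[
\int_0^b \#(\partial E\cap\{y=t\})\,dt+\int_0^a \#(\partial E\cap\{x=s\})\,ds .
\]
Each horizontal circle of length $a$ either meets $E$ in a full circle, misses it, or crosses $\partial E$ at least twice, and similarly for the vertical circles of length $b$. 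Since $E$ and $F$ are distinct cells of the tiling, the boundary of $E$ is nonempty, and we split into cases on which families of lines are \emph{fully contained} in one phase.

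\emph{Case (a): no full line in either phase.} Here every horizontal circle and every vertical circle crosses $\partial E$, giving at least $2a+2b$. But this case still contains the first competitor. To handle it correctly we consider the homology class of $\partial E$ on the torus.
\begin{itemize}
\item If $\partial E$ is null-homologous, $E$ (or $F$) is contained in a contractible region. Counting the lines that meet $E$, with horizontal-extent $h$ and vertical-extent $v$, where $hv\ge x$, gives at least $2h+2v\ge 4\sqrt x$ from the lines meeting $E$. Half-counting then gives $2\sqrt x$.
\item We must also account for the total $a+b$ coming from the complement $F$. I expect to argue this via the fact that in the torus both cells are counted, so the tiling perimeter is half of $\Per(E)+\Per(F)$ with periodic boundaries. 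This is exactly where the chipped rectangle's $a+b$ appears, i.e.\ $\tfrac12\cdot 2(a+b)$, when the generators are chosen as lifts in $\R^2$.
\end{itemize}
Here the point is that the generators $E_1,E_2$ are sets in $\R^2$, not on $\mathbb T$. Their perimeters include the artificial cut edges along which copies glue, so $\Per_\ell(\mathbf T)$ counts the genuine interface between the two tiles plus the interfaces between $E_2$ and its own translates. I would therefore redo the count in $\R^2$: the boundary of each fundamental-domain-like union $E_1\cup E_2$ must separate translates. Projection arguments then give at least $a+b$ for the separating structure, plus at least $2\sqrt x$ for isolating $E_1$ inside, or at least $b$ extra if $E_1$ wraps around as a strip (its two boundary curves each have horizontal projection $a$).

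\emph{Main obstacle.} The main obstacle is making this dichotomy rigorous: either $E_1$ is "contractible" (cost at least $2\sqrt x$ beyond $a+b$, via the $\ell_1$ isoperimetric inequality, whose Wulff shape is the square), or it "wraps" (cost at least the shortest lattice direction, namely $b$, since $a\ge b$).

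\emph{Equality cases.} Uniqueness follows by tracing equality in each inequality:
\begin{itemize}
\item equality in the $\ell_1$ isoperimetric inequality forces axis-aligned squares;
\item equality in the projection counts forces exactly two crossings per line, hence straight strips;
\item the square may sit anywhere relative to the cut, which yields the family of chipped rectangles.
\end{itemize}

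\emph{Concavity.} Finally, $\left(\mathcal I_{G,\ell_1}(x)-\mathcal I_{G,\ell_1}(0)\right)^2$ equals $4x$ for $x\le b^2/4$ and $b^2$ constant up to $\frac12$, then is symmetric. This function is continuous and concave.
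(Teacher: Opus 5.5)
Your upper bound for $x\le b^2/4$, the threshold $2\sqrt x\le b$, and the concavity computation are fine. The lower bound, however, which is the heart of the theorem, is not proved, and you flag this yourself.

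\textbf{The lower bound.} You correctly drop the torus reduction, since $\Per_{\ell_1}(\mathbf T)=\frac12(\Per_{\ell_1}(E_1)+\Per_{\ell_1}(E_2))$ also counts the cuts between $E_2$ and its own translates. But the replacement claim, ``at least $a+b$ for the separating structure, plus at least $2\sqrt x$ for isolating $E_1$'', does not follow from what you have. The $E_1$--$E_2$ interface enters both counts. The two global facts available are: every line meets the tiling boundary, giving $\Per_{\ell_1}(\mathbf T)\ge a+b$; and the Wulff inequality for each cell, giving $\Per_{\ell_1}(\mathbf T)\ge 2\sqrt x+2\sqrt{1-x}$. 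These cannot simply be added.

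The missing idea is to bound the two generators \emph{separately} and then average, with a case split on lines:
\begin{itemize}
\item Suppose every horizontal and every vertical line meets $\bar E_2+G$. Since $E_2$ is bounded, each such line crosses $\partial E_2+G$. The projection estimate of Lemma~\ref{lm:123333} then gives $P_H(E_2)\ge 2a$ and $P_V(E_2)\ge 2b$, i.e.\ $\Per_{\ell_1}(E_2)\ge 2a+2b$ by itself. Adding $\Per_{\ell_1}(E_1)\ge 4\sqrt{|E_1|}$ and halving gives $a+b+2\sqrt{|E_1|}$ (Proposition~\ref{prop:326576}).
\item Otherwise some line, say vertical, lies in $\bar E_1+G$. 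If every vertical line also meets $\bar E_1+G$, the same argument with the cells swapped gives $a+b+2\sqrt{|E_2|}$.
\item If instead another vertical line lies in $\bar E_2+G$, every horizontal line crosses both cells. Hence $P_V(E_1),P_V(E_2)\ge 2b$ each, and together with $P_H(E_1)+P_H(E_2)\ge 2a$ this gives $a+2b$ (Lemma~\ref{prop:39899}). Horizontal lines give $2a+b$ symmetrically.
\end{itemize}

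Your homological dichotomy does not replace this. First, $\partial E$ is \emph{always} null-homologous on the torus. Second, your case (a) does not contain the square competitor, because lines missing the square lie entirely in $F$. It is in fact the easy case, since the interface length is then at least $2a+2b$. Third, the cases where $E_2$ is the ``contractible'' cell, or where the strips are oblique, are never treated. Finally, to speak of lines touching cells you need the regular representatives of Corollary~\ref{cor:planar regularity}: open, bounded, with $\HH^1(\partial E_i\setminus\partial^*E_i)=0$, so that $\bar E_1+G$ and $\bar E_2+G$ cover the plane.

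\textbf{The strip competitor.} It has the orientation swapped. The horizontal strips $[0,a]\times[0,xb]$ and $[0,a]\times[xb,b]$ cost, per period, two horizontal segments of length $a$ plus one vertical segment of length $b$, i.e.\ $2a+b$. This exceeds $a+2b$ when $a>b$. The value $a+2b$ comes from the vertical strips $[0,xa]\times[0,b]$ and $[xa,a]\times[0,b]$, which run along the shorter period. The same slip appears in your ``wraps'' heuristic: two boundary curves of horizontal projection $a$ account for $2a+b$, not $a+2b$.

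\textbf{Uniqueness.} This part is only asserted.
\begin{itemize}
\item Turning ``two crossings per line'' into rectangles requires saturating $|E|\le\frac14P_H(E)P_V(E)$ and using the characterization in Lemma~\ref{lm:43io0568}.
\item In the square case you need the connectedness of $E_2$ (Lemma~\ref{lm:connected}, which uses minimality). This gives $E_2=(I\times J)\setminus(E_1+G)$ for a translate $I\times J$ of the period rectangle, and then one shows the closed square must contain a corner of $I\times J$.
\end{itemize}
So the square may \emph{not} sit anywhere relative to the cut. For example, if only a vertical side of $I\times J$ passes through it, then $\Per_{\ell_1}(E_2)=2a+2b+2\sqrt x$.
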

\begin{figure}
\centering
\begin{overpic}[scale=0.5]{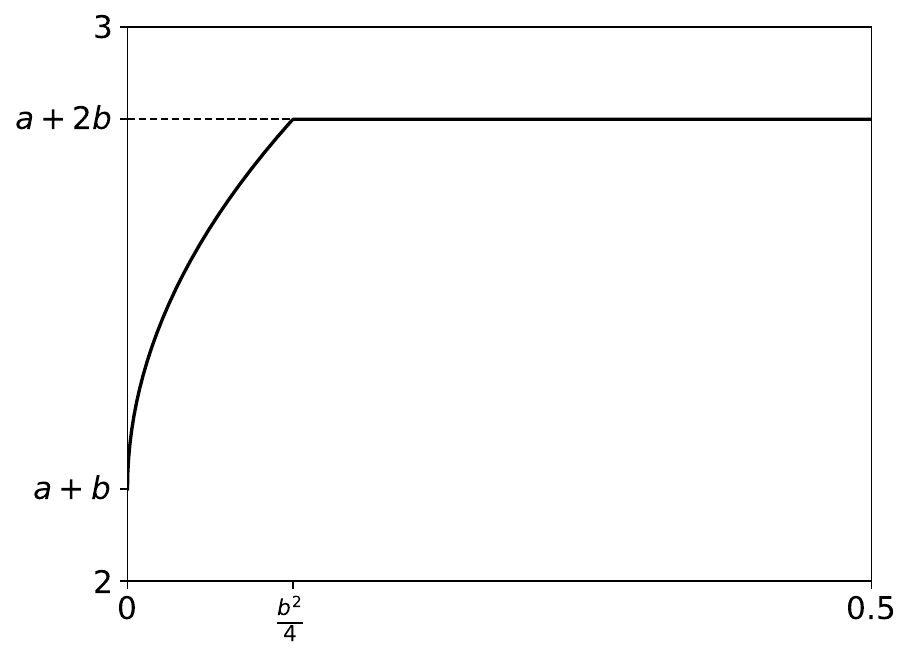}
    \put(19,25){\includegraphics[scale=0.3]{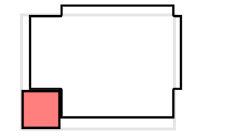}}
    \put(50,40){\includegraphics[scale=0.3]{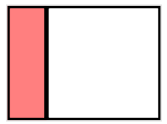}}
\end{overpic}
\caption{Isoperimetric profile $\mathcal I_{G,\ell_1}(x)$.}
\label{fig:profile}
\end{figure}
\begin{figure}
\includegraphics[scale=0.35]{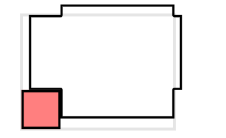}
\hspace{1cm}
\includegraphics[scale=0.18]{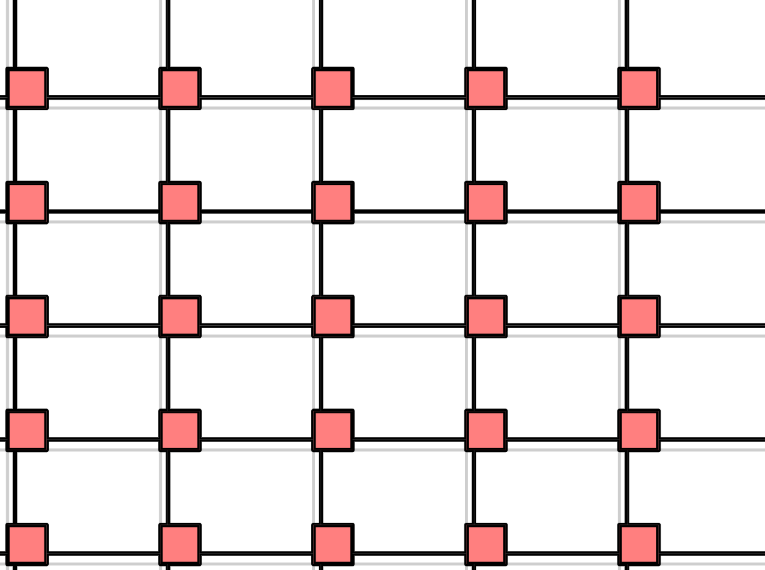}
\caption{The $\ell_1$-isoperimetric double tiling generated by a square and a chipped rectangle.}
\label{fig:small quadratino}
\end{figure}
\begin{figure}
\includegraphics[scale=0.35]{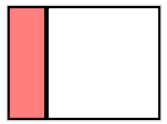}
\hspace{1cm}
\includegraphics[scale=0.18]{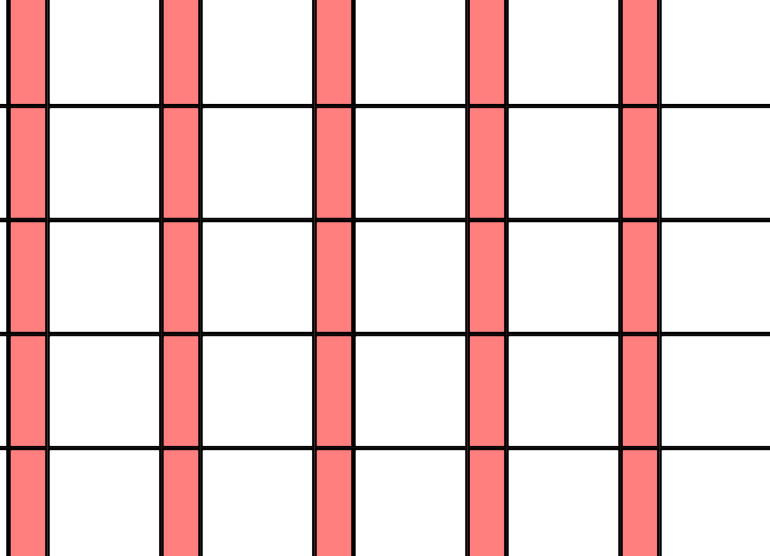}
\caption{The $\ell_1$-isoperimetric double tiling generated by two adjacent rectangles.}
\label{fig:strip}
\end{figure}
The above statement gives an explicit description of the $(G,\ell_1)$-isoperimetric profile function; see Figure \ref{fig:profile} for the plot of $x\mapsto \mathcal I_{G,\ell_1}(x) $ and its minimizers. First, we note that when the double tiling is degenerate, namely when $x\in \{0,1\}$, then the resulting $(G,\ell_1)$-isoperimetric $1$-tiling relative to $G$ is precisely the fundamental domain given by the parallelepiped spanned by the vectors $(a,0),(0,b)$. Instead, when $x \in (0,1)$, the minimizers are also completely classified, and the minimizing models are depicted in Figure \ref{fig:small quadratino} and Figure \ref{fig:strip}, attaining precisely the cost $\mathcal I_{G,\ell_1}(x)$ as $x\in(0,1)$. Finally, these models are unique up to translations, and we recall that in Case i) there are several energetically equivalent possibilities to allocate the small axis-aligned square as described in Figure \ref{fig:case 1 lenghts} (see some concrete examples in Figure \ref{fig:uniqueness case 1}). 
\begin{figure}
    \centering
    \includegraphics[scale=1.1]{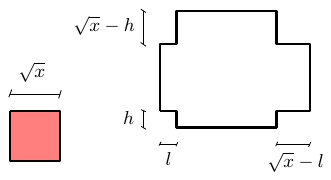}
    \caption{Given $x\le \frac{b^2}{4}$, any choice of $E_1$ (red) and $E_2$ (white) with $h,l \in [0,\sqrt{x}]$ generates a double tiling with perimeter equal to $\mathcal  I_{G,\ell_1}(x)$.}
    \label{fig:case 1 lenghts} 
\end{figure}
This uniqueness conclusion is rather different from that obtained in \cite[Theorem 3.5]{NobiliNovagaPaolini26} up to isometries. This is not surprising, as the $\ell_1$-perimeter is not invariant under arbitrary rotations and allows extra flexibility of minimal shapes, see e.g.\ \cite{MorganChristopherGreenleaf1988}.
\begin{figure}[ht]
    \centering
    \includegraphics[scale=.8]{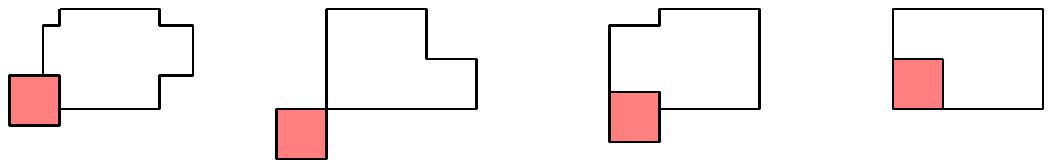}
    \caption{Example of generators with same total $\ell_1$-perimeter.}
    \label{fig:uniqueness case 1}
\end{figure}

\medskip

A natural isoperimetric profile function can be obtained by further minimizing among \emph{all} possible lattices $G$ of the plane (not just rectangular ones). In particular, one can consider the $\ell_1$-isoperimetric profile function defined by
\[
    [0,1]\ni x \mapsto \mathcal I_{\ell_1}(x)\coloneqq \inf \{\mathcal I_{G,\ell_1}(x)\colon G \text{ lattice with }{\rm Area}(G)=1\}.
\]
For the standard perimeter (i.e.\ for the choice of anisotropic norm given by $\phi(x) = \|x\|_{\ell_2}$), the above was considered by the authors in \cite{NobiliNovagaPaolini26}, and a surprisingly explicit description was achieved with three different minimizing models (cf.\ \cite[Theorem 3.5]{NobiliNovagaPaolini26}). In the current $\ell_1$-anisotropic setting, it turns out that the very same analysis is much more rigid.
\begin{theorem}\label{thm:pitagora}
    It holds
    \[
        \mathcal I_{\ell_1}(x) = 2\sqrt x + 2\sqrt{1-x},\qquad \forall x \in [0,1].
    \]
    Furthermore, for all $x\in[0,1]$, $x\neq \frac 1 2$, 
    the Pythagorean double tilings generated by two adjacent axis-aligned squares
    sharing one of their vertices (see Figure \ref{fig:pitagora}) are the only minimizers. 
    In particular, the squares tile the plane with 
    a lattice $G$ generated by two orthogonal unit vectors.

   Finally, if $x=\frac 1 2$ the only possible minimizers  are composed by alternating strips parallel to the coordinate axes (see Figure \ref{fig:meta area tiling}) filled by squares with the same side length $\frac 1 {\sqrt 2}$ that tile the plane with a lattice $G$ generated by the two vectors 
    $g_1 = (\frac 1 {\sqrt 2}, 0)$, 
    $g_2 = (h, \sqrt 2)$ or the symmetric ones 
    $g_1'=(0, \frac 1 {\sqrt 2})$, 
    $g_2' = (\sqrt 2, h)$ for any $h\in[0,\frac 1 {\sqrt 2}]$.
\end{theorem}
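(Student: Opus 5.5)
Both the upper and the lower bound reduce to the same quantity, the $\ell_1$-perimeter of a cell, so I would treat them separately: an explicit construction gives the upper bound, and the planar anisotropic isoperimetric inequality gives the lower bound. The equality cases of the lower bound then drive the classification.

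\emph{Upper bound.} Let $s=\sqrt x$ and $t=\sqrt{1-x}$. Place the square $E_1=[0,s]^2$ and the square $E_2=[s,s+t]\times[-t,0]$, so the two squares share the vertex $(s,0)$. This is the Pythagorean configuration, and it tiles the plane under the lattice generated by $g_1=(s+t,\,-t)$ and $g_2=(t,\,s+t)$, whose determinant is $(s+t)^2+t\cdot t\ldots$; more precisely, $\det(g_1,g_2)=s^2+t^2=1$. Every edge of the tiling is shared by exactly two tiles. Hence
\[
\Per_{\ell_1}(\mathbf T)=\tfrac12\bigl(\Per_{\ell_1}(E_1)+\Per_{\ell_1}(E_2)\bigr)=\tfrac12(8s+8t)\cdot\tfrac12=2s+2t,
\]
where I used that $\Per_{\ell_1}$ of an axis-aligned square of side $r$ is $4r\cdot\|e_i\|_{\ell_1}=4r$. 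This gives $\mathcal I_{\ell_1}(x)\le 2\sqrt x+2\sqrt{1-x}$.

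\emph{Lower bound.} For each cell I would invoke the anisotropic (Wulff) isoperimetric inequality for $\phi=\ell_1$. The Wulff shape of $\ell_1$ is the unit ball of the dual norm $\ell_\infty$, which is an axis-aligned square. The inequality reads $\Per_{\ell_1}(E)\ge 4\sqrt{|E|}$, with equality if and only if $E$ is an axis-aligned square up to null sets and translation. A cell in a periodic tiling is only a generator: it might be unbounded or wrap around. I would handle this by taking $E_i$ as a set of finite perimeter with $|E_i|=x_i$, whose translates are disjoint, so that the $\ell_1$-perimeter of the tiling is $\frac12\sum_i\Per_{\ell_1}(E_i)$ by the definition in the paper. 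The Wulff inequality holds for every finite-perimeter set in $\R^2$, so $\Per_{\ell_1}(\mathbf T)\ge\frac12(4\sqrt x+4\sqrt{1-x})$. This proves the formula, and I expect this step to be routine.

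\emph{Classification.} At a minimizer both cells attain equality in the Wulff inequality, so $E_1$ and $E_2$ are axis-aligned squares of sides $s$ and $t$. The remaining problem is combinatorial: determine the ways in which a $G$-periodic family of translates of these two squares can tile the plane. This is the main obstacle. The plan is the following.
\begin{enumerate}
\item Consider the top edge of a copy of the square of side $s$. The tiles lying above it are squares, and their bottom edges lie on a common horizontal line.
\item I would argue that the tiling is organised either into horizontal rows or into vertical columns. The argument looks at a corner of a square and at how the neighbouring squares can fit around it; this is the classical analysis of tilings of the plane by two sizes of squares.
\item If $s\neq t$, rows of squares of a single size cannot be arranged consistently with equal numbers of the two cells per period, since $G$ has exactly one copy of each square per fundamental domain. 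This forces the Pythagorean pattern.
\end{enumerate}
Concretely, in the Pythagorean tiling each large square has exactly one small square attached at each corner. I would first prove that no two copies of the large square can share a full edge. By periodicity with density one each, they would then form a full column, whose width leaves gaps that the small squares cannot fill with the correct density. Next I would show that the offset between adjacent large squares equals $s$. This identifies the lattice as the one generated by $(s+t,-t)$ and $(t,s+t)$, up to the reflection symmetries of the square lattice that preserve $\ell_1$; up to translation, these give the two Pythagorean orientations.

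For $x=\frac12$ all squares have the same side $1/\sqrt2$. A tiling by congruent axis-aligned squares consists of rows (or columns) that are shifted relative to each other. Periodicity, together with alternating the two cells so that each has density one per unit area, forces the cells to alternate in strips. This yields the lattices generated by $g_1$ and $g_2$ with a free shift $h\in[0,\frac1{\sqrt2}]$, as in the statement. The symmetric family $g_1',g_2'$ arises from the column case.
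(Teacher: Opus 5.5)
Your lower bound is the same as the paper's: the Wulff inequality $\Per_{\ell_1}(E_i)\ge 4\sqrt{\abs{E_i}}$ applied to each generator. The other parts have genuine gaps.

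\textbf{The competitor.} Your explicit tiling is mis-specified. We have $\det\big((s+t,-t),(t,s+t)\big)=(s+t)^2+t^2\neq 1$, so translates of $E_1\cup E_2$ (area $1$) cannot tile the plane under your lattice. In fact, when $0<s<t$, two squares touching only diagonally at a corner never occur in a Pythagorean tiling, because all its vertices are T-junctions. Take instead $E_1=[0,s]^2$, $E_2=[s,s+t]\times[0,t]$, and $G$ generated by $(t,-s)$ and $(s,t)$.

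\textbf{Classification for $0<x<\tfrac12$.} This is the more serious gap: it is not proved. Your step 2 (the tiling is organised into rows or columns) is false for the very tiling you want to reach, since the Pythagorean tiling contains no horizontal or vertical line made of edges. In your concrete version, claim (a) can be proved by your gap argument: if $(t,0)\in G$, the rows of large squares leave gaps of height $s^2/t<s$. Claim (b), that adjacent large squares are offset by exactly $s$, is the whole content of the uniqueness, and you support it only by appeal to a ``classical analysis''.

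The missing idea, which the paper uses, is that every edge of the large square $E_2$ must be partly shared with a lattice translate of $E_2$. Otherwise that edge, of length $t>s$, is covered by at least two small squares stacked along it. This puts $(0,s)$ or $(s,0)$ in $G$, and then $E_2$ overlaps its own translate. The rest follows:
\begin{enumerate}
\item The shared edges give $(t,h),(w,t)\in G$ with $\abs{h},\abs{w}<t$.
\item Disjointness of the translates of $E_2$ forces $hw\le 0$.
\item The sublattice spanned by these two vectors has covolume $t^2+\abs{hw}<2$, so it equals $G$, and $\abs{hw}=s^2$.
\item The complement of $E_2+G$ consists of translates of a $\abs{w}\times\abs{h}$ rectangle. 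These must be the translates of the square $E_1$, hence $\abs{h}=\abs{w}=s$.
\end{enumerate}

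\textbf{The case $x=\tfrac12$.} Your step here cannot be carried out, because the claim is false. Take $a=1/\sqrt2$, $E_1=[0,a]^2$, $E_2=[a,2a]\times[0,a]$, and $G$ generated by $(a,a)$ and $(a,-a)$, which has covolume $2a^2=1$. This is the checkerboard, with cost $4a=2\sqrt2=\mathcal I_{\ell_1}(\tfrac12)$. Yet $G$ contains neither $(a,0)$ nor $(0,a)$, so it is not one of the listed strip tilings. More generally, the lattice generated by $(2a,0)$ and $(\delta,a)$ gives a minimizer for every $\delta$: shifted rows, with colours alternating within each row.

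The paper dismisses this case with ``it is easy to conclude'' and misses the same family, so that clause of the statement is incomplete as written. The correct analysis runs as follows. A tiling by congruent axis-parallel squares consists of shifted rows (or columns).
\begin{itemize}
\item If the row translation $(a,0)$ lies in $G$, every row is monochromatic, and you get the strips with $g_2=(h,2a)$.
\item Otherwise $G$ has index two in the lattice generated by $(a,0)$ and $(\delta,a)$, and you get the (sheared) checkerboards.
\end{itemize}

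\textbf{The endpoints $x\in\{0,1\}$.} The lattices generated by $(1,0)$ and $(h,1)$ with $h\notin\Z$, tiled by unit squares, are also minimizers. So the uniqueness claim fails at the endpoints too. Consistently with this, your gap argument gives no contradiction when $s=0$.
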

The above result is obtained by a direct application of the Wulff isoperimetric inequality for the $\ell_1$-perimeter. Indeed, see Theorem \ref{thm:Wulff} and \eqref{eq:isoperimetrica quadrata}, it always holds that $\mathcal I_{\ell_1}(x) \ge 2\sqrt x + 2\sqrt{1-x}$. The uniqueness part is instead obtained by a careful use of the rigidity of the Wulff shape in the $\ell_1$-isoperimetric inequality in the plane. We also refer to \cite{PGS91,MartiniMakaiSoltan98} for more insights around planar tilings generated by squares.
\begin{figure}
\includegraphics[scale=0.3]{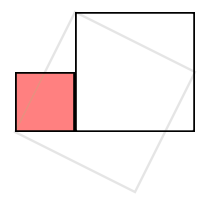}
\hspace{1cm}
\includegraphics[scale=0.2]{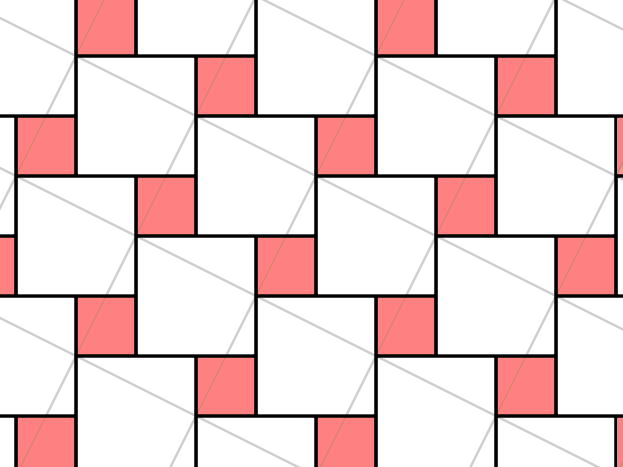}
\caption{The Pythagorean double tiling.}
\label{fig:pitagora}
\end{figure}

\medskip

The main results obtained in Theorem \ref{thm:main result} and Theorem \ref{thm:pitagora} are the starting point to further investigate the local minimality of the partitions induced by the $\ell_1$-isoperimetric double tilings. The natural concept, see Definition \ref{def:locally anisotropic isop}, is that of a locally $\phi$-isoperimetric partition.
\begin{remark}
    \rm
    We remark that in the case when $G$ is different from the lattice of the Pythagorean tiling and $x\neq \frac 1 2$, the isoperimetric tilings relative 
    to a rectangular lattice $G$ cannot induce a locally $\ell_1$-isoperimetric partition in the sense of Definition \ref{def:locally anisotropic isop}. In fact, denote by $(E_n)$ the partition of $\R^2$ induced by any of the $\ell_1$-isoperimetric double tilings relative to a rectangular lattice $G$, given by Theorem \ref{thm:main result}. Given a square $Q_r = [-r,r]^2$ with $r>0$, consider the partition $(F_n^r)$ 
    with $\abs{F_n^r}=\abs{E_n}$
    so that $E_n\triangle F_n^r \Subset Q_{r+1}$
    and $F_n^r$ is a square for all $n\in\N$ such that $E_n\subset Q_r$
    and fill the annulus $Q_{r+1} \setminus Q_r$ in a suitable way.
    Then, we have 
    \[
        \lim_{r\to\infty}\frac{\frac 12 \sum_{n\in \N}\Per_{\ell_1}(F^r_n,Q_{r+1}) }{4 r^2} = 2\sqrt{x}+2\sqrt{1-x} =\mathcal I_{\ell_1}(x).
    \]
    Since, by assumption, we have 
    $\mathcal I_{\ell_1}(x) < \mathcal I_{\ell_1,G}(x)$
    we deduce that $(E_n)$ cannot be a locally $\ell_1$-isoperimetric partition.

    On the other hand, the Pythagorean double tiling induces a locally $\ell_1$-isoperimetric partition, because every tile must satisfy 
    the isoperimetric inequality.\fr 
\end{remark}
In light of the above remark, a possible variant is to sent one area to infinity ($|E_2|\uparrow \infty$, hence also ${\rm Area}(G)\uparrow\infty$) while the remaining one is kept fixed ($|E_1|=1$). 
\begin{figure}
\includegraphics[scale=.75]{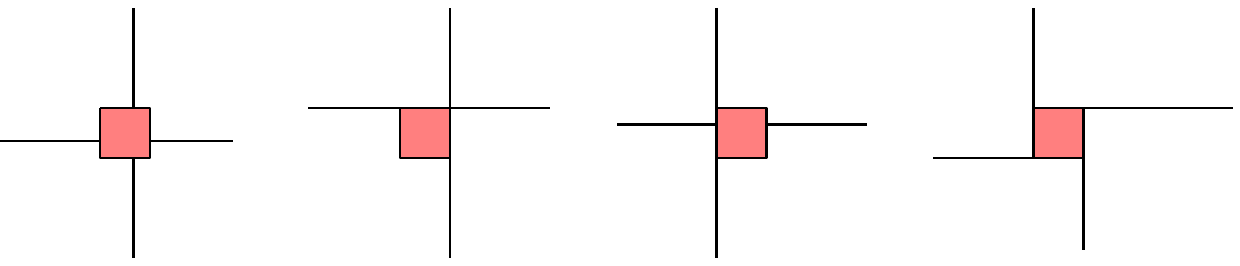}
\caption{Possible partitions obtained by sending $|E_2|\uparrow \infty$ while keeping $|E_1|=1$ on the isoperimetric double tilings: the first three partitions are obtained from Case i) in Theorem \ref{thm:main result}, the last one (the \emph{vortex}) is obtained from Theorem \ref{thm:pitagora}.}
\label{fig:global}
\end{figure}
\begin{theorem}\label{thm:locally isop}
Let $h,l \in [-1,1]$, 
$E_1 = (h,l)+[-1,1]^2 \subset \R^2$, 
and let
$E_2,E_3,E_4$ be the four connected components 
of $\R^2 \setminus (E_1 \cup \{x=0\} \cup \{y=0 \})$. 
Then the partition of the plane given by $\mathbf E = (E_1,\dots,E_5)$
is locally $\ell_1$-isoperimetric (cf.\ Definition \ref{def:locally anisotropic isop}).

The same property holds true if we consider 
the \emph{vortex} partition $\mathbf E$ given by $E_1=[0,1]^2$, and the connected components $E_2,E_3,E_4,E_5$  of 
$\R^2 \setminus(E_1 \cup \{y=1, x\ge 1\} \cup \{y=0,x\le 0\} \cup \{x=0,y\ge 1\} \cup \{x=1, y\le 0\})$
(see Figure~\ref{fig:global}).
\end{theorem}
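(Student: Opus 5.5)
The plan is a slicing (Crofton-type) argument that exploits the product structure of the $\ell_1$-norm, combined with the one-dimensional Wulff inequality $L_xL_y\ge |F_1|$ for the bounded cell. Fix a large square $Q=Q_r$ and a competitor partition $\mathbf F=(F_1,\dots,F_5)$ with $|F_i|=|E_i|$ and $F_i\triangle E_i\Subset Q$, as in Definition \ref{def:locally anisotropic isop}. Set $J(\mathbf F)=\frac12\sum_i \Per_{\ell_1}(F_i,Q)$. Since $\phi(\nu)=|\nu_1|+|\nu_2|$, the slicing formula for sets of finite perimeter gives
\[
J(\mathbf F)=\int_{-r}^{r} N^h_{\mathbf F}(y)\,\d y+\int_{-r}^{r} N^v_{\mathbf F}(x)\,\d x .
\]
Here $N^h_{\mathbf F}(y)$ is the number of interface points, that is, jump points of the piecewise constant label map, on the horizontal slice $\{(t,y):|t|<r\}$, and $N^v_{\mathbf F}$ is the analogous count on vertical slices. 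This identity holds for a.e.\ slice, and on a.e.\ slice the restricted sets are finite unions of intervals.

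\textbf{Step 1 (lower bound on each slice).} For a.e.\ $y$, the two endpoints of the horizontal slice lie outside the perturbed region. So they belong to the same cells as for $\mathbf E$, and I will check that these are two \emph{different} cells. For the first partition, the cell on the left of $\{x=0\}$ differs from the one on the right. For the vortex, when $0<y<1$ the left end lies in the cell bounded by the rays $\{y=0,x\le0\}$ and $\{x=0,y\ge1\}$, while the right end lies in the cell bounded by $\{y=1,x\ge1\}$ and $\{x=1,y\le0\}$; these are distinct, and the cases $y>1$ and $y<0$ are similar. Hence $N^h_{\mathbf F}(y)\ge1$. Moreover, if $y\in\pi_y(F_1)$, where $\pi_y$ is the projection onto the $y$-axis (taken up to null sets, slice by slice), then the slice of the bounded set $F_1$ is a nonempty bounded finite union of intervals, so it contributes at least $2$ interface points. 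Therefore $N^h_{\mathbf F}(y)\ge \max\{1,2\cdot\mathbf 1_{\pi_y(F_1)}(y)\}$, and integrating gives
\[
\int N^h_{\mathbf F}\ge 2r+L_y, \qquad L_y:=|\pi_y(F_1)|,
\]
and symmetrically $\int N^v_{\mathbf F}\ge 2r+L_x$.

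\textbf{Step 2 (area constraint).} Since $F_1\subset \pi_x(F_1)\times\pi_y(F_1)$, we have $L_xL_y\ge|F_1|=|E_1|$. By AM--GM this gives $L_x+L_y\ge 2\sqrt{|E_1|}$, which equals $4$ for the first partition and $2$ for the vortex. Hence $J(\mathbf F)\ge 4r+2\sqrt{|E_1|}$.

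\textbf{Step 3 (equality for $\mathbf E$).} It remains to check that $\mathbf E$ attains this bound, i.e.\ that every estimate in Step 1 is an equality for $\mathbf E$. For the first partition, the square $E_1$ contains the origin because $h,l\in[-1,1]$. So a horizontal line through $E_1$ meets only the two vertical sides of $E_1$, the axis $\{x=0\}$ being interior to $E_1$ there, while any other horizontal line meets only $\{x=0\}$. Thus $N^h_{\mathbf E}=\max\{1,2\cdot\mathbf 1_{\pi_y(E_1)}\}$, and the same holds vertically. For the vortex the rays are arranged so that horizontal lines with $0<y<1$ meet only the two vertical sides of $E_1$, whose continuations are not hit. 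Lines with $y>1$ meet only the ray $\{x=0\}$, and lines with $y<0$ meet only the ray $\{x=1\}$. In both cases $J(\mathbf E)=4r+2\sqrt{|E_1|}$, which proves local minimality.

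The main obstacle is the measure-theoretic rigor of Step 1. One must justify the slicing formula for the sum of the perimeters of all cells of a partition, where the factor $\frac12$ reflects that each interface point is counted once for each of its two adjacent cells. One must also justify that a.e.\ slice of $F_1$ has essential boundary of cardinality at least $2$ whenever the slice has positive length, and that a.e.\ slice has its endpoints in the unperturbed cells. I would handle this via the one-dimensional characterization of $BV$ functions on a.e.\ line (e.g.\ \cite{AmbrosioFuscoPallarabook}), applied to the label map $\sum_i i\,\mathbf 1_{F_i}$, working with essential projections to avoid null-set issues.
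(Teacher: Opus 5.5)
Your argument is correct, and it takes a genuinely different route from the paper.

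\textbf{The paper's route.} It never handles competitors in $Q_R$ directly. It views $\mathbf E\cap Q_R$ as a piece of a periodic double tiling: the square plus chipped square relative to $2R\,\Z^2$, or, for the vortex, a Pythagorean tiling whose second square is very large. Its isoperimetry then comes from the classification in Theorem \ref{thm:main result} (resp.\ Theorem \ref{thm:pitagora}). Finally, Lemma \ref{lm:51fdfg} transfers periodic minimality to volume-constrained local minimality.

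\textbf{Your route.} You prove the sharp bound $\frac12\sum_i\Per_{\ell_1}(F_i,Q_r)\ge 4r+L_x+L_y\ge 4r+2\sqrt{|E_1|}$ directly by slicing. This is the localized analogue of the mechanism behind Proposition \ref{prop:326576}: projections plus $|F_1|\le L_xL_y$. The topological input is that the two endpoint cells of every horizontal and vertical line are distinct.

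\textbf{What your route buys.} It is self-contained: it needs none of the existence, density-estimate, connectedness or case-analysis machinery behind Theorem \ref{thm:main result}. It applies to arbitrary Borel competitors. It only uses $|F_1|=|E_1|$, so the areas of the unbounded cells need not be prescribed. It also avoids technical points of the periodic transfer. For instance, the hypothesis $\bar\Omega\cap(\bar\Omega+g)=\emptyset$ of Lemma \ref{lm:51fdfg} fails for $\Omega=Q_R$, $G=2R\,\Z^2$, which forces a slightly larger period.

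\textbf{What the paper's route buys.} It explains where these partitions come from: they are limits of isoperimetric double tilings as one area diverges. Lemma \ref{lm:51fdfg} is also a transfer principle valid for any norm. Your identity $J=\int N^h+\int N^v$ relies on the product structure of $\ell_1$ and would not survive, for example, the Euclidean perimeter.

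\textbf{One step to state explicitly.} You should justify working only with large $r$. A competitor for $Q_R$ is also a competitor for $Q_{R'}$, $R'>R$, with the same perimeter difference. Hence minimality in large squares gives minimality in all squares; the paper uses the same reduction implicitly.
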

The result above proves that certain $5$-partitions of the plane with four regions with infinite area are local minimizers of the anisotropic perimeter. This is indeed obtained by exploiting Theorem~\ref{thm:main result} and Theorem~\ref{thm:pitagora}, and by studying their local minimality properties. Finally, we recall the recent series of works studying minimality properties of clusters with chambers of possibly infinite assigned volume, \cite{AlamaBronsardVriend25,NovagaPaoliniTortorelli25,BronsardNeumayerNovackSkorobogatova25,NovagaPaoliniTortorelli25_nonunique,NeumayerNovackSkorobogatova26} for the isotropic perimeter, and \cite{Benitez26} in the anisotropic case.
\begin{figure}
\includegraphics[scale=0.25]{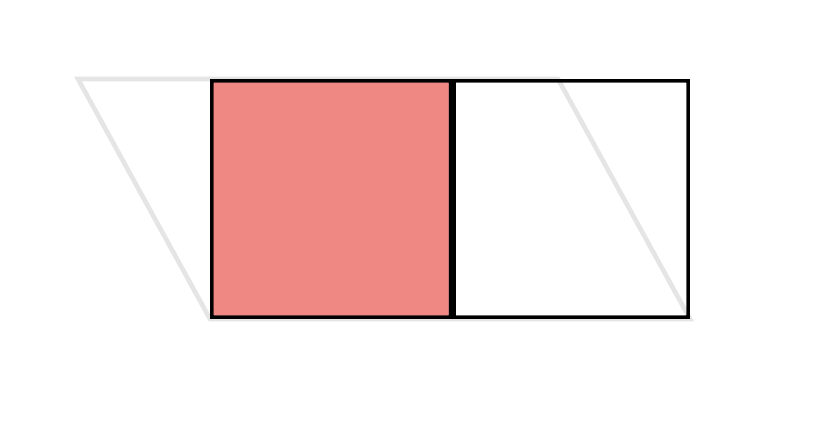}
\hspace{1cm}
\includegraphics[scale=0.2]{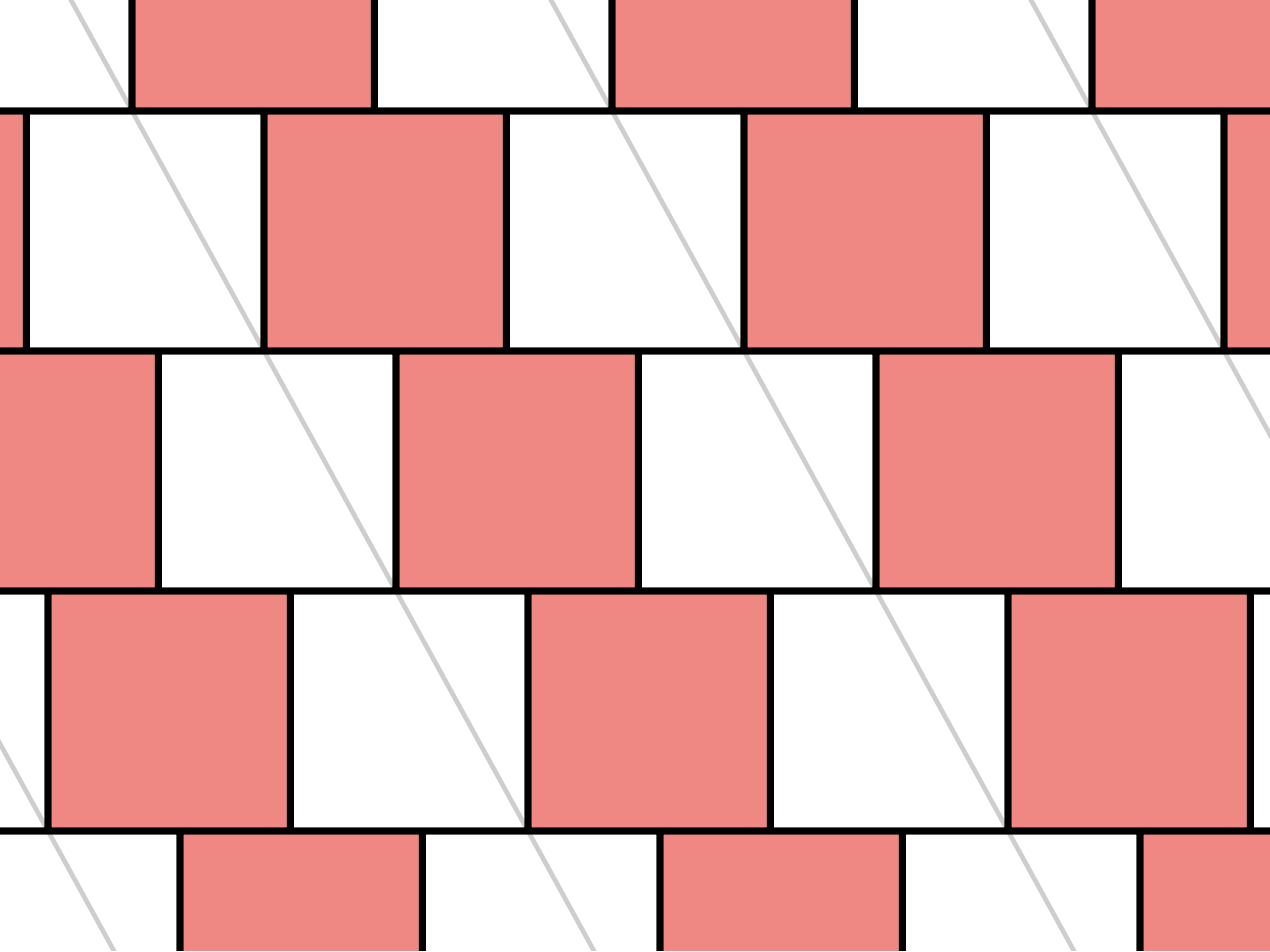}
\caption{A possible $\ell_1$-isoperimetric double tiling composed of two cells of equal area and with the same perimeter of the Pythagorean double tiling.}
\label{fig:meta area tiling}
\end{figure}
\section{Preliminaries}\label{sec:prelim} 
In this section, we recall some useful facts around the classical and anisotropic perimeters.
\subsection{Sets of finite perimeter}
We shall list some useful facts around sets of finite perimeter, referring to \cite{AmbrosioFuscoPallarabook} for a detailed introduction. For every $E\subseteq\R^d$ Borel and $\Omega\subseteq \R^d$, we recall that $E$ is said a set of finite perimeter in $\Omega$ if
\[
    \Per(E,\Omega)\coloneqq \sup\left\{ \int_E {\rm div}(X)\,\d \mathcal L^d \colon X \in C^\infty_c(\Omega,\R^d),\|X\|_{L^\infty}\le 1\right\}<+ \infty.
\]
The value $\Per(E,\Omega)$ is called the relative perimeter of $E$ in $\Omega$, and it extends to a locally finite nonnegative measure $\Per(E,\cdot)$ on the whole Borel $\sigma$-algebra. This will be called perimeter measure of $E$ and, when $\Omega=\R^d$, we shall simply write $\Per(E)\coloneqq \Per(E,\R^d)$. We then say that $E$ is a set of locally finite perimeter, provided $\Per(E,\Omega)<\infty$ is finite for all $\Omega\subset \R^d$ bounded open sets. In this case, there is a vector valued measure, denoted $D\nchi_E$, that satisfies $|D\nchi_E| = \Per(E,\cdot)$ and 
\[
    D\nchi_E = \nu_E \HH^{d-1}\mres{\partial^*E},
\]
where $\HH^{d-1}$ is the $(d-1)$-dimensional Hausdorff measure, and $\partial^*E$ is the reduced boundary of $E$, defined by $\partial^*E\coloneqq \{x \in \R^d \colon m(E,x)>0,m(E^c,x)>0 \}$, with $m(E,x) \coloneqq \lim_{r\to 0} \frac{\mathcal L^d(B_r(x)\cap E)}{\mathcal L^d(B_r(x)}$, and $\nu_E(x)$ is the outer unit normal of the set $E$, which is $\HH^{d-1}$-a.e.\ defined on $\partial^*E$. 
\subsection{Anisotropic surface energy}
In this note, we shall consider anisotropic variants of the standard Euclidean perimeter. We recall first some basic facts around anisotropic norms and the Wulff problem, referring to \cite[Chapter 20]{Maggi12_Book} for a detailed discussion.

Given a norm $\phi \colon \R^d\to [0,\infty)$, we define the anisotropic surface $\phi$-energy of a set $E\subset \R^d$ relative to an open set $\Omega\subset \R^d$ by
\[
    \Per_\varphi(E,\Omega)\coloneqq \int_{\partial^*E \cap \Omega}\phi(\nu_E)\,\d \HH^{d-1}(x) \in [0,\infty],
\]
where the above is canonically set to $+\infty$ if $E$ is not a set of locally finite perimeter. As before, we simply write $\Per_\phi(E)\coloneqq \Per_\varphi(E,\R^d)$. The Wulff problem is the following variant of the isoperimetric problem
\begin{equation}
    \inf \{ \Per_\varphi(E) \colon E\subset \R^d, \abs{E}=m\}, \qquad\forall m>0.\label{eq:Wulff}
\end{equation}
and the minimizers are called Wulff shape and are completely characterized by the next theorem (see, e.g.\ \cite[Theorem 20.8 and Eq. (20.14)]{Maggi12_Book}). Below, $\phi^*$ is the dual norm defined by $\phi^*(v) \coloneqq \sup\{  v\cdot x \colon \phi(x)<1\}$ for all $v \in \R^d.$
\begin{theorem}\label{thm:Wulff}
    For all $m>0$ there is $r>0$ such that $m=r^d|W_\phi|$, where $W_\phi \subset \R^d$ is the Wulff shape given by
    \[
        W_\phi \coloneqq \{ v \in \R^d \colon \phi^*(v)<1\}.
    \]
    Then, for every $x_0 \in \R^d$, the set $x_0+rW_\phi$ is a minimizer of \eqref{eq:Wulff}. In particular, for every $E\subset \R^d$ Borel, the Wulff inequality holds
    \[
        \Per_\phi(E) \ge n |W_\phi|^{\frac 1d}\abs{E}^{\frac{d-1}{d}}.
    \]
\end{theorem}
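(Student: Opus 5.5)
The statement just above is Theorem~\ref{thm:Wulff}, the classical Wulff theorem, quoted from \cite[Theorem 20.8]{Maggi12_Book}. I would prove it by the standard Gromov/Brenier mass-transport argument, so as not to assume existence of minimizers. Here $n$ in the Wulff inequality is the ambient dimension $d$. Set $W\coloneqq W_\phi$. The first reduction is the identity $\Per_\phi(W)=d|W|$. Since $W$ is convex and bounded, it has a Lipschitz boundary. For $\HH^{d-1}$-a.e.\ $x\in\partial W$ we have $\phi^*(x)=1$, and the outer normal satisfies $x\cdot\nu_W(x)=\phi(\nu_W(x))$, by duality and because $W$ is the $\phi^*$-unit ball. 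The divergence theorem applied to the identity field then gives $d|W|=\int_{\partial W}x\cdot\nu_W\,\d\HH^{d-1}=\Per_\phi(W)$. By scaling, $\Per_\phi(x_0+rW)=d|W|^{1/d}|x_0+rW|^{(d-1)/d}$. The first claim of the theorem, that $m=r^d|W|$ for a suitable $r$, is then trivial, and minimality of $x_0+rW$ follows once the inequality is proved.

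For the inequality, first reduce to $E$ bounded with smooth boundary. If $\Per_\phi(E)=\infty$ there is nothing to prove. Otherwise, approximate $E$ by smooth bounded sets $E_k$ with $|E_k|\to|E|$ and $\Per_\phi(E_k)\to\Per_\phi(E)$; this follows from the usual mollification plus Reshetnyak continuity, since $\phi$ is continuous and $1$-homogeneous. Then take the Brenier map $T=\nabla u$ pushing $\frac{1}{|E|}\mathcal L^d\mres E$ onto $\frac{1}{|W|}\mathcal L^d\mres W$, with $u$ convex. The Monge--Ampère relation gives $\det D^2u=|W|/|E|$ a.e.\ on $E$ in the Alexandrov sense. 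Applying AM--GM to the eigenvalues gives
\[
\Big(\tfrac{|W|}{|E|}\Big)^{1/d}\le \tfrac{1}{d}\Delta u .
\]

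Integrate this over $E$ and use the divergence theorem, with $\nabla u\in \overline W$, so $\phi^*(\nabla u)\le1$ and hence $\nabla u\cdot\nu_E\le\phi(\nu_E)$. This yields
\[
d|W|^{1/d}|E|^{(d-1)/d}\le\int_{\partial E}\nabla u\cdot\nu_E\,\d\HH^{d-1}\le \Per_\phi(E).
\]
The main obstacle is justifying the integration by parts, because $u$ is only convex. Its distributional Laplacian dominates the absolutely continuous part of $D^2u$, so $\int_E\Delta_{ac}u\le\int_E \d(\Delta u)=\int_{\partial E}\nabla u\cdot\nu_E$. For the last equality I would mollify $u$ and pass to the limit, using $|\nabla u|$ bounded and the smoothness of $\partial E$. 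Finally, apply the inequality to the $E_k$ and let $k\to\infty$.
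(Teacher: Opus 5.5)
The paper gives no proof of this theorem: it quotes it from Maggi's textbook (Theorem 20.8 there) and credits uniqueness to Brothers--Morgan and Figalli--Maggi--Pratelli. So there is no in-paper argument to match. Your argument is a correct, self-contained proof: Gromov's transport argument in its Brenier form, as in Figalli--Maggi--Pratelli.

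What your route buys is generality. It works for arbitrary $d$ and arbitrary norm, uses no existence theory, and is the variant that extends to equality cases and stability. The price is McCann's a.e.\ Monge--Amp\`ere equation and an integration by parts for a merely convex potential.

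For the only instance the paper actually uses, \eqref{eq:isoperimetrica quadrata} with $d=2$ and $\phi=\ell_1$, none of this is needed. The paper's own Lemma~\ref{lm:123333} is proved with projections and bounding rectangles only, and gives $|E|\le\frac14P_H(E)P_V(E)$. Hence
\[
\Per_{\ell_1}(E)=P_H(E)+P_V(E)\ge2\sqrt{P_H(E)P_V(E)}\ge4\sqrt{|E|}.
\]
Running the equality cases through that proof gives the rigidity of the square as well: there is one indecomposable component, it equals a.e.\ its bounding rectangle, and $P_H=P_V$.

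Three points in your write-up need tightening, none of them serious.

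\textbf{Finite area.} The inequality requires $|E|<\infty$. For $E=\R^d\setminus B_1$ the left side is finite and the right side is infinite. The statement as printed omits this hypothesis, and your approximation by bounded smooth sets silently uses it.

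\textbf{Choice of potential.} Take the Brenier potential as $u=\psi^*$ with $\psi\equiv+\infty$ off $\overline W$. Then $\partial u(x)\subset\overline W$ for every $x\in\R^d$, not just a.e.\ on $E$. You need this because mollifying near $\partial E$ samples $\nabla u$ outside $E$.

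\textbf{The boundary equality.} The equality $(\Delta u)(E)=\int_{\partial E}\nabla u\cdot\nu_E\,\d\HH^{d-1}$ is not justified as written. $\partial E$ is Lebesgue-null, so $\nabla u$ has no meaning there. Nothing you have shown prevents $\Delta u$ from charging $\partial E$: for a radial mollifier, $\int_E\Delta u_\eps\to(\Delta u)(E)+\frac12(\Delta u)(\partial E)$, not $(\Delta u)(E)$.

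You do not need the equality, only the inequality, and your own mollification gives it. Set $u_\eps=u*\rho_\eps$. Since $\overline W$ is closed and convex, $\nabla u_\eps\in\overline W$, hence $\nabla u_\eps\cdot\nu_E\le\phi(\nu_E)$. Lower semicontinuity of weak$^*$ convergent nonnegative measures on the open set $E$ then gives
\[
\begin{aligned}
d|W|^{\frac1d}|E|^{\frac{d-1}{d}}&\le\int_E\Delta_{ac}u\,\d x\le(\Delta u)(E)\le\liminf_{\eps\to0}\int_E\Delta u_\eps\,\d x\\
&=\liminf_{\eps\to0}\int_{\partial E}\nabla u_\eps\cdot\nu_E\,\d\HH^{d-1}\le\Per_\phi(E).
\end{aligned}
\]
With these adjustments your proof is complete.
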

We also note, see \cite{BrothersMorgan94,FigalliMaggiPratelli10}, that the Wulff shape is also the unique minimizer up to translation (see also \cite{FigalliMaggiPratelli10,FuscoEspositoTrombetti05,Neumayer16} for quantitative improvements). The above result, in the planar case with the Manhattan anisotropy $\phi(x)=\|x\|_{\ell_1},\phi^*(v)=\|v\|_{\ell_\infty}$ reads as follows. Since $W_{\ell_1}\subset \R^2$ is a square with area $|W_\phi|=4$, Theorem \ref{thm:Wulff} gives 
\begin{equation}
    \Per_{\ell_1}(E)\ge 4\sqrt{\abs{E}},\qquad\forall E\subset \R^2, \label{eq:isoperimetrica quadrata}
\end{equation}
and equality occurs if and only if $E$ is equivalent to a square of edge-length $\sqrt{\abs{E}}$ and aligned with the coordinate axes, up to translations.
\section{Existence and regularity}
In this section, we shall discuss basic existence and topological regularity properties of anisotropic isoperimetric tilings, with special attention to the planar setting.

\subsection{Existence of anisotropic isoperimetric tilings}
The following existence result was obtained in \cite{NobiliNovaga24}, but a short proof is included for completeness with emphasis on a certain construction in the planar setting. We recall that an $N$- tiling $\mathbf T=\{E_i+g \colon i=1,\dots,N,\, g\in G\}$ is said locally finite if for all $i=1,\dots,N$ and for all compact set $C\subset \R^d$ it holds
\begin{equation}
    N_i \coloneqq \# G_i <\infty, \qquad \text{where }G_i\coloneqq \{ g \in G\colon |(E_i+g)\cap C|>0\}\label{eq:def Ni}
\end{equation}
Let us also define the packing and covering radius of $G$, respectively given by
\begin{align*}
\rho_G&\coloneqq\sup \{ r>0 \colon B_r(g_1)\cap B_r(g_2) = \emptyset \text{ for all }g_1\neq g_2 \in G\};\\
r_G&\coloneqq \inf\{r>0 \colon \cup_g (B_r(0)+g) = \R^d \}.
\end{align*}
\begin{theorem}\label{thm:existence}
    Let $N\in\N, d\ge 2$ and let $G$ be a lattice in $\R^d$. Let $v = (v_1,\dots,v_N)$ be so that $\sum_{i=1}^N v_i = |G|$. Let $\phi \colon \R^d \to [0,\infty)$ be a norm. Then, there exists a minimizer of 
    \[
        \inf \{ \Per_\phi(\mathbf T) \colon \mathbf{T}=\{E_i+g \colon i=1,\dots,N,\, g\in G\} \text{ with }|E_i|=v_i\}.
    \]
    In particular, the minimizer is a $\phi$-isoperimetric $N$-Tiling relative to $G$. Finally, if $d=2$, there is a minimizing tiling such that each generator is essentially bounded (in particular, it is locally finite).
\end{theorem}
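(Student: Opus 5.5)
We will prove the existence part by passing to the quotient torus $\mathbb T_G \coloneqq \R^d/G$ and using the direct method, and then use a planar cut-and-paste to make the generators bounded.

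\emph{Reduction to the torus.} A periodic $N$-tiling $\{E_i+g\}$ relative to $G$ corresponds to a partition $(F_1,\dots,F_N)$ of $\mathbb T_G$ with $F_i = \pi(E_i)$. Here $\pi\colon \R^d\to \mathbb T_G$ is the projection, and it is injective up to null sets on each $E_i$ because the translates $E_i+g$ are essentially disjoint. Conversely, any partition of the torus lifts to generators by intersecting the lifts $\pi^{-1}(F_i)$ with a fixed fundamental domain $D$.

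We then need the identity $\Per_\phi(\mathbf T) = \frac12\sum_i \Per_\phi(F_i;\mathbb T_G)$. Heuristically this holds because the reduced boundary of $E_i$ projects $\HH^{d-1}$-injectively onto the reduced boundary of $F_i$, up to the parts of $\partial^*E_i$ that are glued to other translates of $E_i$. Those parts cancel in the periodic partition and do not appear in the partition of the torus. Cancellation of glued boundaries can only lower the energy, so we get
\[
\Per_\phi(\mathbf T)\ \ge\ \tfrac12\textstyle\sum_i \Per_\phi(F_i;\mathbb T_G).
\]
Moreover, equality is attained by the lifted generators $\pi^{-1}(F_i)\cap D$ after we rearrange them. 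Therefore the infimum equals the minimal partition problem on the compact flat torus with volume constraints $|F_i|=v_i$.

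\emph{Direct method.} On $\mathbb T_G$ we take a minimizing sequence of partitions. Since $\phi$ is comparable to the Euclidean norm, the perimeters are uniformly bounded in $BV(\mathbb T_G)$. Compactness of the embedding $BV\hookrightarrow L^1$ on the compact torus gives an $L^1$-limit partition with the same volumes, since the volume constraint passes to the limit by $L^1$ convergence. Lower semicontinuity of the anisotropic perimeter, via the Reshetnyak-type result for convex one-homogeneous integrands, shows the limit is a minimizer. Lifting it gives a minimizing tiling, which is $\phi$-isoperimetric relative to $G$ by definition.

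\emph{Planar boundedness.} This is the main obstacle. A torus partition lifted naively to a fundamental domain need not give bounded generators with the right perimeter: a component of $F_i$ may wrap around the torus, so its lift is a periodic strip rather than a bounded set. For $d=2$ the plan is as follows.
\begin{itemize}
\item First, use the $\phi$-minimality and density estimates on $\mathbb T_G$ to show that each $F_i$ has finitely many indecomposable components, each with finite perimeter.
\item For each component $C$, lift it to $\R^2$ and take one connected (indecomposable) component $\tilde C$ of $\pi^{-1}(C)$. The stabilizer of $\tilde C$ in $G$ is a subgroup of rank $0$, $1$ or $2$.
\item In rank $0$, $\tilde C$ is bounded, because it is an indecomposable set of finite perimeter in the plane, and we keep it.
\item In rank $1$ or $2$, we cut $\tilde C$ with fundamental segments (rank $1$) or with a fundamental parallelogram (rank $2$), and keep one period.
\end{itemize}
Cutting along these lines adds interfaces, but the two sides of each cut belong to the same generator $E_i$ and to translates of it. Those interfaces are internal to the periodic union, so they do not count in $\Per_\phi(\mathbf T)$ as computed on the torus; we must check this carefully against the definition $\frac12\sum\Per_\phi(E_i)$. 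If that check fails, the fallback is to choose the cuts so as to avoid the reduced boundary of $F_i$ except on an $\HH^1$-null set, using the coarea formula on generic parallel lines.

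In the end each generator is a finite union of bounded pieces, hence essentially bounded. Local finiteness then follows, because only finitely many translates of a bounded set meet any compact set $C$, so each $N_i$ in \eqref{eq:def Ni} is finite.
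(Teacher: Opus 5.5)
There is a genuine gap, and it is in your first step. The infimum over tilings is not the minimal partition problem on $\mathbb T_G=\R^d/G$. In $\Per_\phi(\mathbf T)=\frac12\sum_i\Per_\phi(E_i)$, the interface where $E_i$ meets its own translates $E_i+g$ is part of $\partial^*E_i$ and is counted. Nothing cancels.

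Only the inequality $\Per_\phi(\mathbf T)\ge\frac12\sum_i\Per_\phi(F_i;\mathbb T_G)$ is true, and no rearrangement of $\pi^{-1}(F_i)\cap D$ gives equality in general:
\begin{itemize}
\item For $N=1$, $F_1=\mathbb T_G$ has torus perimeter $0$, while every fundamental domain has positive perimeter; in this paper $\mathcal I_{G,\ell_1}(0)=a+b$.
\item For $N=2$, $a=b=1$, $x=\frac12$ with the $\ell_1$ norm, two parallel strips on the torus cost $\frac12(2+2)=2$. By Theorem~\ref{thm:main result}, however, $\mathcal I_{G,\ell_1}(\frac12)=a+2b=3$.
\end{itemize}
So your direct method on the compact torus minimizes a different, strictly smaller functional, and nothing relates its minimizer to the tiling infimum. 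The real difficulty in existence is the loss of compactness in $\R^d$: pieces of generators can drift apart by lattice vectors. Passing to the torus hides this only by discarding the self-interfaces, which is exactly where the functional differs.

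The planar step inherits the same problem, and your suspicion about the check is justified: it fails. Cutting a lifted strip or a doubly periodic component to keep one period creates new reduced boundary of the generator $E_i$. That boundary is counted in $\frac12\sum_i\Per_\phi(E_i)$ however the cut lines are chosen. The coarea fallback only makes the cuts avoid $\partial^*F_i$; it does not make them free. The rank-$1$ and rank-$2$ cases are themselves artifacts of the torus reformulation: in $\R^2$, an indecomposable set of finite area and finite perimeter is automatically bounded.

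What is missing is an argument with the correct functional, which is what the paper does.
\begin{itemize}
\item Take a minimizer of the problem posed over fundamental domains $D$ and Borel partitions $(E_i)$ of $D$. The paper cites this existence result rather than reproving it.
\item Decompose $D=\bigcup_iE_i$ into indecomposable components $D_l$ with $\Per_\phi(D)=\sum_l\Per_\phi(D_l)$.
\item Translate each $D_l$ by some $g_l\in G$ so that it meets $B_{r_G}(0)$, and set $E_i'=\bigcup_l\big((E_i\cap D_l)-g_l\big)$.
\end{itemize}
Distinct indecomposable components have $\HH^1$-negligible common reduced boundary. Hence $\Per_\phi(E_i)=\sum_l\Per_\phi(E_i\cap D_l)$, so the rearrangement cannot increase $\sum_i\Per_\phi(E_i)$, and it still produces a fundamental domain. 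The planar estimate ${\rm diam}(D_l)\le c\,\Per_\phi(D_l)$ for indecomposable sets then gives boundedness, using only lattice translations and no cuts.
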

\begin{proof}
    In \cite[Theorem 3.2]{NobiliNovaga24} (with the choice $\mu=0$), it was shown that there exists a minimizer of the following variational problem:
    \begin{equation}
        \inf_D \inf \left\{ \frac12\sum_{i=1}^N\Per(E_i) \colon \begin{array}{ll}
        E_i\subseteq D\text{ Borel}, &|E_i|=v_i,\quad i=1,...,N\\
         |E_i\cap E_l|=0, &\big|D\setminus \cup_i E_i \big|=0,
    \end{array}  \right\},\label{eq:variational existence}
   \end{equation}
   where the first infimum is taken among all fundamental domains $D$ of $G$, and the second infimum is taken 
   among all Borel partitions $(E_i)$ of $D$ with assigned areas $v_1,..,v_N$. 
   Evidently, $D,(E_i)$ minimizes the above if and only if 
   $\T \coloneqq \{ E_i + g \colon i=1,\dots,N, g \in G\}$ is an $\ell_1$-isoperimetric 
   $N$-tiling relative to $G$ with prescribed volumes $v_1,\dots,v_N$.

   Finally, if $d=2$, the last conclusion is implied by \cite[Proposition 5.1]{NobiliNovaga24}. Since we shall later need the precise construction, we report it briefly here. Let us consider the fundamental domain $D\coloneqq \cup_i E_i$ and its decomposition into indecomposable sets $(D_l)$ satisfying $|D|=\sum_l |D_l|, \Per_{\phi}(D)=\sum_ l\Per_\phi(D_l)$ (\cite{AmbrosioCasellesMasnouJean01}). For all $l\in\N$,  consider $g_l \in G$ so that $(D_l-g_l)\cap B_{r_G}(0) \neq \emptyset$ (here $r_G>0$ is the covering radius of the lattice $G$) and define the sets
    \begin{equation}
        E_i' \coloneqq \cup_l \big((E_i \cap D_l)-g_l\big),\qquad \tilde D_l \coloneqq D_l -g_l, \quad \forall i,l.
    \label{eq:decomposition}
    \end{equation}
    By construction, $ D' \coloneqq \cup_l \tilde D_l$ is also a fundamental domain for $G$ and $( E_i')$ is a partition up to negligible sets of $ D'$. By construction, it also holds
    \[
        \sum_i\Per_\varphi( E_i') =  \sum_i\Per_\varphi( E_i).
    \]
    and in particular $\mathbf T' \coloneqq \{E_i'+g \colon i=1,\dots,N,\, g\in G\}$ is a $\phi$-isoperimetric $N$-tiling relative to $G$ satisfying $| E_i'|=v_i$ for all $i=1,\dots,N$. Moreover, we have
    \[
        {\rm diam}( E_i')\le {\rm diam}(\tilde D) \le 2r_G + \sum_l {\rm diam}(D_l) \le  2r_G + c \sum_l \Per_\varphi(D_l)\le 2r_G + c' \Per_\varphi(\mathbf T)<\infty,
    \]
    for some dimensional constants $c,c'$, having used the diameter to perimeter estimates for indecomposable planar sets of finite perimeter (cf.\ \cite[Lemma 2.13]{DayrensMasnouNovagaPozzetta22}).
    Therefore, the partition $( E_i' + g)_{i,g\in G}$ is locally finite, and the generators $E_i'$ are essentially bounded, by construction.
\end{proof}
\subsection{Topological regularity of anisotropic tilings}
In this part, we establish basic density estimates and topological regularity properties of anisotropic tilings. 

We first start with a definition of local minimality for partitions, and then show that it is satisfied by an isoperimetric tiling. 
\begin{definition}\label{def:local minimal}
    Let $\phi$ be a norm in $\R^d$, let $(E_n)$ be a Borel partition of an open set $ \Omega\subset \R^d$. We say that $(E_n)$ is a volume constrained minimizer of the $\phi$-perimeter relative to $\Omega$, provided for every $B$ open bounded subset of $\Omega$ and every partition $(F_n)$ such that
    \[
        |E_n\cap \Omega|=|F_n\cap \Omega|,\qquad E_n \triangle F_n \Subset B,\qquad \forall n\in \N,
    \]
    it holds
    \[
        \sum_{n\in\N}\Per_\phi(E_n,B)\le \sum_{n\in\N}\Per_\phi(F_n,B).
    \]
\end{definition}
\begin{lemma}\label{lem:local minimality}
     Let $\phi$ be a norm in $\R^d$, let $G$ be a lattice, and let $\mathbf T\coloneqq \{E_i+ g \colon g \in G,i=1,\dots,N \}$ be a $\phi$-isoperimetric $N$-tiling relative to a lattice $G$. Let $ \Omega \subset \R^d$ be an open set such that $\overline \Omega \cap (\overline \Omega + g) =\emptyset$ for all $g \in G\setminus \{ 0 \}$ (for instance, $\Omega=B_\rho(x)$ for all $x\in\R^d,\rho<\rho_G$). Then, the partition $(E_i +g)_{i,g}$ of $\R^d$ is a volume constrained  local minimizer of the $\phi$-perimeter relative to $\Omega$.
\end{lemma}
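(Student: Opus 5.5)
The plan is to argue by contradiction, using $G$-periodicity to turn a local competitor into a periodic one. Fix an open bounded $B\subset\Omega$ and a competitor partition $(F_{i,g})_{i,g}$ of $\R^d$, indexed like $(E_i+g)_{i,g}$, with
\[
|F_{i,g}\cap\Omega|=|(E_i+g)\cap\Omega|\quad\text{and}\quad (E_i+g)\triangle F_{i,g}\Subset B .
\]
Suppose it has strictly smaller total $\phi$-perimeter in $B$. We may take $B$ to be $\Omega$ itself, since perimeters outside the modification region agree. I will then build a competitor $N$-tiling $\mathbf T'$ relative to $G$ with the same cell areas and $\Per_\phi(\mathbf T')<\Per_\phi(\mathbf T)$. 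This contradicts \eqref{eq:isoperimetric rel G}.

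\textbf{Step 1 (periodic competitor).} Let $K\Subset B\subset\Omega$ be a compact set containing every $(E_i+g)\triangle F_{i,g}$. The hypothesis $\overline\Omega\cap(\overline\Omega+g)=\emptyset$ for $g\ne0$ makes the translates $\Omega+g$ pairwise disjoint. So I can define the new partition of $\R^d$ by copying the modification into every translate: on $\Omega+g'$, the region of label $(i,g)$ is replaced by $F_{i,g}+g'$ restricted to $\Omega+g'$. Outside $\bigcup_{g'}(\Omega+g')$ nothing changes. The labels $(i,g)$ must be reindexed compatibly: the piece of label $i$ in the new configuration should be $\tilde E_i + G$. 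To this end I set
\[
\tilde E_i := \Bigl(E_i\setminus \bigcup_{g'\in G} (\Omega-g'... )\Bigr)\cup\cdots ,
\]
and more cleanly
\[
\tilde E_i := \Bigl(E_i \setminus \bigcup_{g'}(K+g')\Bigr)\cup\bigcup_{g\in G}\bigl((F_{i,g}\cap K)-g\bigr).
\]
This way $\tilde E_i+g$ agrees with $F_{i,g}$ on $K$ and with $E_i+g$ outside $K+G$. I must check that $\{\tilde E_i+g\}$ is a tiling (disjointness and covering follow from disjointness of the translates $K+g'$ and the fact that $(F_{i,g})$ partitions $K$). The area is $|\tilde E_i|=|E_i|$, because $\sum_g |F_{i,g}\cap K|=\sum_g|(E_i+g)\cap K|$ label by label. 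Here I use the volume constraint in the form $|F_{i,g}\cap\Omega|=|(E_i+g)\cap\Omega|$, summed over $g$; summing over $g$ is harmless since only finitely many labels meet $K$ after using local finiteness, or by monotone convergence.

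\textbf{Step 2 (perimeter count).} The perimeter of a periodic tiling equals the total $\phi$-perimeter of the induced partition in a fundamental domain, counting each interface once via the factor $\frac12$. I choose the fundamental domain $D$ so that it contains $\Omega$, which is possible since the translates of $\Omega$ are disjoint. Then
\[
\Per_\phi(\mathbf T)-\Per_\phi(\mathbf T')=\tfrac12\Bigl(\sum\Per_\phi(E_i+g,\Omega)-\sum\Per_\phi(F_{i,g},\Omega)\Bigr)>0 .
\]
This holds because the two partitions coincide near $\partial\Omega$ (as $K\Subset\Omega$), so no boundary contributions arise on $\partial\Omega$.

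The main obstacle is the bookkeeping in Step 1. It has two parts. First, the labels inside $\Omega$ may belong to several different translates $E_i+g$, and I must ensure the reassembled $\tilde E_i$ is well defined and has the right area. This uses only the per-label volume equality, which Definition \ref{def:local minimal} provides. Second, I must justify that perimeter measures localize, i.e.\ that the reduced boundaries of $\tilde E_i+g$ and $F_{i,g}$ coincide on a neighborhood of $K$. That follows from the locality of $D\nchi$ on open sets where the characteristic functions agree a.e.
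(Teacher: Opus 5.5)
Your proposal is correct and follows essentially the same route as the paper. Both arguments periodize the local competitor by copying the modification into every translate of $\Omega$, use the per-label volume constraint to get $|\tilde E_i|=|E_i|$, and compare perimeters on a fundamental domain $D\supset\Omega$ where the two periodic configurations differ only inside $\Omega$. The only differences are cosmetic: you copy on translates of a compact $K\Subset B$ and argue by contradiction, while the paper copies on translates of $\Omega$ and argues directly.

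One point to state explicitly is the choice of $D$. Its translates should partition $\R^d$ exactly, or with overlaps that are perimeter-negligible, so that $\sum_i\Per_\phi(E_i)=\sum_{i,g}\Per_\phi(E_i+g,D)$; the paper secures this by requiring $\Per_\phi(E_i,\partial D)=0$.
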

\begin{proof}
    Consider a fundamental domain $D\supset\Omega$ with the property $\Per_\phi (E_i,\partial D) = 0$ for all $i=1,\dots,N$. This is possible by assumptions on $\Omega$, and up to an arbitrary small translation of $D$. Let $(g_l)$ for $l \in\N$ be an enumeration of $G$ and let us denote for brevity $E_{i,l} \coloneqq E_i + g_l$ for every $l \in \N,i$.

    Let $(F_{i,l})$ be another partition of $\R^d$ with the property that $E_{i,l}\triangle F_{i,l}\Subset B$ and $|E_{i,l}\cap \Omega|=|F_{i,l}\cap \Omega|$ for all $i,l$. Set
    \[
        F_i \coloneqq  \left( E_i \setminus \bigcup_{ l} (\Omega-g_l)\right)\cup \left(\bigcup_{l} (F_{i,l} -g_l)\right).
    \]
    We note that $\tilde D \coloneqq \cup_i F_i$ is a fundamental domain for $G$, using that $D$ is a fundamental domain and that $(D +g_l) \triangle (\cup_iF_{i,l}) \Subset \Omega$ by assumptions. Define then the $N$-tiling
    \[
        \mathbf T' \coloneqq \{ F_{i}+g \colon g \in G,i=1,\dots,N\},
    \]
    that is periodic relative to $G$ and satisfies by construction $|F_i|=|E_i|$ for all $i=1,\dots,N$. Using at first that $\mathbf T$ is $\phi$-isoperimetric relative to $G$, we deduce
    \begin{align*}
         0 & \le \Per_\phi(\mathbf T')-\Per_\phi(\mathbf T) = \frac 12 \sum_i\Per(F_i) -\frac 12\sum_i\Per(E_i)  \\
         &\le\sum_i\Per_\phi\big(F_i,\cup_l (\Omega-g_l)\big) -\sum_i\Per_\phi\big(E_i, \cup_l (\Omega-g_l)\big) \\
         &\le  \sum_{i,l}\Per_\phi(F_{i,l},\Omega)-  \sum_{i,l}\Per_\phi(E_{i,l}, \Omega)  
    \end{align*}
    having used the subadditivity of the perimeter on disjoint sets, its translation invariance, and the choice of $D$. This concludes the proof.
\end{proof}
Regularity theory is by now well understood in the case of sets and clusters, for the classical isotropic perimeter. Many basic regularity results, however, also hold true for the $\phi$-anisotropic perimeter for any norm $\phi$ (without any regularity assumptions). We refer to the general references \cite{Almgren76,MorganChristopherGreenleaf1988,Maggi12_Book,FranceschiPratelliStefani23_anisotropic} for further details on the regularity of isoperimetric clusters. The following result establishes a basic topological regularity result for partitions minimizing the anisotropic perimeter, with an omitted proof as it would not bring any novelty. The idea is the same in all cases: first, a volume fixing result must be established (Almgren's lemma for partitions), this will trade the volume fixing constraint with a quasi-minimality property (see \cite[Appendix A]{BonaciniCristoferiTopaloglu2025} for a similar setting), then the co-area formula will produce a differential inequality to obtain a suitable decay of the densities on small balls.
The constants might depend on the partition, and on a prescribed distance from the boundary of the open domain, but are uniform on any such point.
\begin{theorem}[density estimates for partitions]\label{th:almgren}
Let $\phi\colon \R^d\to [0,+\infty)$ be a norm. 
Let $\Omega\subset \R^d$ be an open and connected set.
Let $(E_1,\dots,E_M)$ be a volume constrained minimizer of the $\phi$-perimeter relative to $\Omega$.
Then, for every $\eps>0$ there exists $c>0$, $\rho_0>0$ such that if $x\in \Omega$, $\dist(x,\Omega^c)>\eps$ it holds
\begin{equation}
  \abs{E_n \cap B_\rho(x)} \ge c \rho^d, \qquad
  \Per_\phi(E_n, B_\rho(x)) \ge c \rho^{d-1},
\end{equation}
for all $\rho<\rho_0$ and $n=1,\dots,M.$
\end{theorem}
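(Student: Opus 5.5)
We follow the classical scheme for density estimates of minimizing clusters, adapted to a general norm $\phi$. Since $\phi$ is a norm, there are constants $0<\lambda\le\Lambda$ with $\lambda|\nu|\le\phi(\nu)\le\Lambda|\nu|$. Hence $\lambda\Per(E,A)\le\Per_\phi(E,A)\le\Lambda\Per(E,A)$ for every Borel set $A$. This lets us use the Euclidean relative isoperimetric inequality and the coarea formula, at the price of constants.

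\textbf{Step 1 (volume fixing, Almgren's lemma).} Fix $\eps>0$ and set $\Omega_\eps=\{\dist(\cdot,\Omega^c)>\eps\}$. For each pair of chambers $E_n,E_m$ that share a portion of reduced boundary inside $\Omega$, choose a small ball $B_{n,m}\Subset\Omega$ centred on $\partial^*E_n\cap\partial^*E_m$. Inside it, a one-parameter family of smooth compactly supported flows exchanges volume between $E_n$ and $E_m$, with $\frac{d}{dt}|E_n\cap B_{n,m}|\neq0$ at $t=0$. Connectedness of $\Omega$ makes the adjacency graph of the chambers connected, so composing finitely many such flows adjusts any vector of volume defects $(\delta_1,\dots,\delta_M)$ with $\sum_n\delta_n=0$ and $|\delta|\le\sigma_0$. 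The $\phi$-perimeter changes by at most $C|\delta|$: the first variation of $\int\phi(\nu)\,\d\HH^{d-1}$ under a smooth diffeomorphism is bounded by $\Lambda$ times the Jacobian distortion. We take the balls $B_{n,m}$ finitely many and pairwise disjoint. Then $\rho_0$ is chosen so small that any ball $B_{\rho_0}(x)$ with $x\in\Omega_\eps$ misses at least one full set of volume-fixing balls, which may require two disjoint families. This yields the quasi-minimality: for $x\in\Omega_\eps$, $\rho<\rho_0$ and any competitor $(F_n)$ with $E_n\triangle F_n\Subset B_\rho(x)$,
\[
\sum_n\Per_\phi(E_n,B_\rho(x))\le\sum_n\Per_\phi(F_n,B_\rho(x))+C\sum_n|E_n\triangle F_n|.
\]

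\textbf{Step 2 (volume density).} Fix $n$ and $x\in\Omega_\eps$, and let $m(r)=|E_n\cap B_r(x)|$. For a.e.\ $r$, test the competitor that removes $E_n\cap B_r(x)$ from $E_n$ and assigns it to a neighbouring chamber, say the one with largest density in $B_r(x)$, or simply $E_k$ for a fixed $k\ne n$. The perimeter saved is at least $\lambda\Per(E_n,B_r(x))$. The perimeter added is at most $\Lambda\HH^{d-1}(E_n\cap\partial B_r(x))=\Lambda m'(r)$ by coarea. Combining quasi-minimality with the Euclidean isoperimetric inequality $\Per(E_n\cap B_r)\ge c_d m(r)^{(d-1)/d}$ gives
\[
c_d\,m(r)^{\frac{d-1}{d}}\le\Per(E_n,B_r)+m'(r)\le C'm'(r)+Cm(r).
\]
For $r$ small the term $Cm(r)\le Cm^{(d-1)/d}\,\omega_d^{1/d}r$ is absorbed. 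Integrating the resulting inequality $(m^{1/d})'\ge c$ then gives $m(r)\ge c r^d$, provided $x$ lies in the support of $E_n$. The statement should be read with $x\in\partial E_n$, or with $E_n$ normalized to its set of density points, since otherwise the lower bound is false. The same argument applied to the complement, the union of the other chambers, gives the upper density bound.

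\textbf{Step 3 (perimeter density, and the main obstacle).} Both $|E_n\cap B_\rho|$ and $|B_\rho\setminus E_n|$ are bounded below by $c\rho^d$. The relative isoperimetric inequality in the ball then gives $\Per(E_n,B_\rho)\ge c\rho^{d-1}$, hence $\Per_\phi\ge\lambda c\rho^{d-1}$. The main obstacle is Step 1. Uniformity of the constants over all of $\Omega_\eps$ requires the volume-fixing variations to be placed away from the ball under consideration. It also requires controlling the $\phi$-energy variation of a non-smooth integrand, which only needs $\phi$ Lipschitz and is handled via the area formula for the pushed-forward reduced boundary. For countably many chambers one first restricts to the finitely many chambers meeting a compact neighbourhood. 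The statement here concerns only $M$ chambers, so this issue does not arise.
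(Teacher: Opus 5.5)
Your overall scheme matches the route the paper indicates; the paper omits the proof. That scheme is: Almgren-type volume fixing with two disjoint families of balls, giving quasi-minimality with constants uniform on $\{\dist(\cdot,\Omega^c)>\eps\}$; then a coarea differential inequality; then the relative isoperimetric inequality. You are also right that the estimates can only hold at points of $\partial E_n$ for a normalized representative. However, your Step 2 fails as written when $M\ge 3$.

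Suppose you give $E_n\cap B_r(x)$ to a fixed chamber $E_k$, or to the chamber of largest volume density. The only interface that disappears is $\partial^*E_n\cap\partial^*E_k\cap B_r(x)$. Every interface between $E_n$ and some $E_j$, $j\ne n,k$, is merely relabelled as an $E_k$--$E_j$ interface with the same normal, hence the same $\phi$-cost. So the saving in $\sum_m\Per_\phi(E_m,\cdot)$ is $2\int_{\partial^*E_n\cap\partial^*E_k\cap B_r(x)}\phi(\nu_{E_n})\,\d\HH^{d-1}$, not $\lambda\Per(E_n,B_r(x))$. It can be zero, e.g.\ if $E_k$ does not touch $E_n$ near $x$. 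In that case your inequality $c_d m^{(d-1)/d}\le C'm'+Cm$ does not follow.

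The missing idea is a pigeonhole choice of the receiving chamber, depending on $r$. Since $\HH^{d-1}$-a.e.\ point of $\partial^*E_n\cap\Omega$ lies in exactly one other reduced boundary $\partial^*E_k$, for a.e.\ $r$ you can pick $k=k(r)\ne n$ with $\HH^{d-1}(\partial^*E_n\cap\partial^*E_k\cap B_r(x))\ge\frac{1}{M-1}\Per(E_n,B_r(x))$. Reassigning $E_n\cap B_r(x)$ to this $E_k$ then:
\begin{itemize}
\item saves at least $\frac{2\lambda}{M-1}\Per(E_n,B_r(x))$;
\item adds at most $2\Lambda m'(r)$ on $\partial B_r(x)$;
\item costs $2Cm(r)$ in the quasi-minimality error.
\end{itemize}
This gives your differential inequality, with constants now depending also on $M$. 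The dependence of $k$ on $r$ is harmless, because the inequality is used pointwise for a.e.\ $r$ before integrating. Your complement argument for the bound on $\abs{B_\rho(x)\setminus E_n}$ is fine as stated, since filling $B_r(x)$ with $E_n$ removes all interfaces inside the ball.
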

Leveraging on the density estimates for partitions minimizing the $\phi$-perimeter, we will establish a crucial regularity property for anisotropic isoperimetric tilings that are locally finite (cf.\ \eqref{eq:def Ni}). 
\begin{theorem}[regularity of isoperimetric tilings]\label{th:regularity}
Let $N\in\N$, let $\phi$ be a norm in $\R^d$, let $G$ be a lattice, and let $\mathbf T\coloneqq \{E_i+ g \colon g \in G,i=1,\dots,N \}$ be a $\phi$-isoperimetric $N$-tiling relative to the lattice $G$. Assume that $(E_i+g)_{i,g}$ is a locally finite partition. Then there are $c,\rho_0>0$ such that for all $x\in \R^d$ and $i=1,\dots, N$, it holds
\[
    \abs{E_i\cap B_\rho(x)} > c \rho^d, \qquad
    \Per_\phi(E_i,B_\rho(x)) > c \rho^{d-1},\qquad\forall\rho\le \rho_0.
\]
In particular, there exists a representative $\tilde E_i$ with $|E_i\triangle \tilde E_i|=0$ and $\HH^{d-1}(\partial \tilde E_i \setminus \partial^* \tilde E_i)=0$.
\end{theorem}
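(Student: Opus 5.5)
The plan is to combine Lemma \ref{lem:local minimality} with Theorem \ref{th:almgren}, using local finiteness to turn the infinite partition $(E_i+g)_{i,g}$ into a finite partition near each point. Periodicity then makes the constants uniform.

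First, fix $\rho_1<\rho_G/2$ and a point $x\in\R^d$, and set $\Omega=B_{2\rho_1}(x)$. Since $4\rho_1<2\rho_G$, we have $\overline\Omega\cap(\overline\Omega+g)=\emptyset$ for $g\neq 0$. Lemma \ref{lem:local minimality} then shows that $(E_i+g)_{i,g}$ is a volume constrained minimizer relative to $\Omega$. By local finiteness \eqref{eq:def Ni} with $C=\overline\Omega$, only finitely many tiles $E_i+g$, say $M$ of them, meet $\Omega$ in positive measure. Restricted to $\Omega$, we therefore have a finite partition $(F_1,\dots,F_M)$ to which Theorem \ref{th:almgren} applies with $\eps=\rho_1$.

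Applying it at the centre $x$ and at nearby points, we obtain $c,\rho_0$ such that $|F_n\cap B_\rho(y)|\ge c\rho^d$ and $\Per_\phi(F_n,B_\rho(y))\ge c\rho^{d-1}$ for $y\in B_{\rho_1}(x)$ and $\rho<\rho_0$. The statement only requires the estimates at points of the boundary of the relevant tile, i.e.\ at points of $\partial$ of a suitable representative, where both densities are positive. This is the standard reading of the density estimates, and I would make the normalization explicit by working with $\operatorname{supp}|D\nchi_{E_i}|$. Translating back by $g$, the estimates hold for $E_i$ itself near $x-g$.

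Second, I would make the constants uniform. By periodicity it suffices to let $x$ range over a compact fundamental domain $\overline D$. The family of local configurations $\{(E_i+g)\cap B_{2\rho_1}(x)\}$, for $x\in\overline D$, is controlled by finitely many tiles: those meeting the $2\rho_1$-neighbourhood of $\overline D$. Cover $\overline D$ by finitely many balls $B_{\rho_1}(x_k)$ and take the minimum of the resulting constants $c_k$ and $\rho_{0,k}$. Translation invariance of all quantities then extends the bounds to every $x\in\R^d$.

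This uniformity step is where I expect the main subtlety. Theorem \ref{th:almgren} gives constants depending on the partition and on $\Omega$, so finiteness of the cover is essential, and local finiteness is exactly what allows each $\Omega$ to contain only finitely many chambers.

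Finally, for the representative, take $\tilde E_i$ to be the set of points of density one, or equivalently normalize so that $\partial\tilde E_i=\operatorname{supp}\Per(E_i,\cdot)$. At every $y\in\partial\tilde E_i$, the lower volume bounds for both $E_i$ and its complement hold. The complement is a union of finitely many other tiles locally, each satisfying the estimate. Hence $y$ has lower density bounded away from $0$ and $1$ for both $E_i$ and $E_i^c$.

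By Federer's theorem, $\HH^{d-1}$-a.e.\ point has density in $\{0,\tfrac12,1\}$ and points of density $\tfrac12$ are in $\partial^*\tilde E_i$ up to null sets. Alternatively, one can use the perimeter lower bound $\Per(\tilde E_i,B_\rho(y))\ge c\rho^{d-1}$ on $\partial\tilde E_i$ together with the standard density argument: $\HH^{d-1}\mres\partial\tilde E_i\le C\,\Per(\tilde E_i,\cdot)$, and $\Per(\tilde E_i,\cdot)$ is concentrated on $\partial^*\tilde E_i$. Either route yields $\HH^{d-1}(\partial\tilde E_i\setminus\partial^*\tilde E_i)=0$.
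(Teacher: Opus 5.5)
Your proposal is correct and follows essentially the same route as the paper. The paper likewise covers a bounded fundamental domain by finitely many small balls, on each of which Lemma~\ref{lem:local minimality} gives volume-constrained minimality and local finiteness gives a finite partition; it then applies Theorem~\ref{th:almgren} on each ball, takes the worst constants, and extends to all of $\R^d$ by $G$-translation invariance. Your two refinements make explicit what the paper only calls a ``byproduct'': you restrict the estimates to points of the support of $\Per(E_i,\cdot)$, and you give the upper-density (or Federer) argument for $\HH^{d-1}(\partial\tilde E_i\setminus\partial^*\tilde E_i)=0$.
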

\begin{proof}
Let $D\subset \R^d$ be bounded fundamental domain for the lattice $G$, take $r< \rho_G$ and consider $x_1,\dots, x_n \in \R^d$ such that the family of balls $(B_{r/2}(x_k))$ is an open covering for the compact set $\bar D$.
For $k=1,\dots,n$, we apply Lemma \ref{lem:local minimality} and Theorem~\ref{th:almgren} (with $\eps=r/4$) with the partition obtained by restricting $(E_i+g)_{i,g\in G}$ to the ball $\Omega = B_{r}(x_k)$. Observe that, since the $N$-tiling $\mathbf T$ is assumed to induce a locally finite partition, this restriction yields each time a \emph{finite} partition of $\Omega$.

Consider the constants $c>0$, $\rho_0>0$ given by each application of Theorem~\ref{th:almgren} as $k=1,\dots,n$. By selecting the worst possible constants, we can assume that $c$ and $\rho_0$ do not depend on $k$. For any $x\in \R^d$, there is $g\in G$ such that $x+g \in D$ and there is $k$ such that 
$x+g \in B_{r/2}(x_k)$. In particular, for every $\rho < r/4$, it holds that $B_\rho(x+g) \subset B_{r}(x_k)$ and $\dist\left(x+g,\R^d \setminus B_{r}(x_k)\right)>\eps$. Hence the conclusion of Theorem~\ref{th:almgren} yields the desired
measure and perimeter density estimates for each $E_i$ in $B_\rho(x+g)$. 
The same estimates therefore hold at the point $x$, by $G$ translation invariance. Finally, the last conclusion holds as a byproduct of the density estimates.
\end{proof}
As a byproduct of the above, we can obtain two crucial results for planar anisotropic isoperimetric tilings. These will be essential to prove our main result.
\begin{corollary}\label{cor:planar regularity}
    Let $N\in\N$ and let $G$ be a lattice in the plane $\R^2$. Let $v = (v_1,\dots,v_N)$ be so that $\sum_{i=1}^N v_i = |G|$. Let $\phi \colon \R^d \to [0,\infty)$ be a norm. Then, there exists a $\phi$-isoperimetric $N$-tiling $\mathbf T\coloneqq \{E_i+ g \colon g \in G,i=1,\dots,N \}$ relative to a lattice $G$ satisfying $|E_i|=v_i$, and each generator $E_i$ admits an open and bounded representative $\tilde E_i$ with $|E_i\triangle \tilde E_i|=0$ and $\HH^1(\partial \tilde E_i \setminus \partial^* \tilde E_i)=0$.
\end{corollary}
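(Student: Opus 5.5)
We obtain the corollary by chaining the planar part of Theorem~\ref{thm:existence} with Theorem~\ref{th:regularity}, and then passing to the open representative given by the points of positive lower density.

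First, Theorem~\ref{thm:existence} (with $d=2$) gives a $\phi$-isoperimetric $N$-tiling $\mathbf T=\{E_i+g\}$ relative to $G$ with $|E_i|=v_i$. By its last assertion we may choose it, through the construction \eqref{eq:decomposition}, so that every generator $E_i$ is essentially bounded and the induced partition $(E_i+g)_{i,g}$ is locally finite in the sense of \eqref{eq:def Ni}. Since this tiling is still $\phi$-isoperimetric, Theorem~\ref{th:regularity} applies to it. It provides constants $c,\rho_0>0$ such that, for every $x\in\R^2$, every $i$ and every $\rho\le\rho_0$, the density estimates hold for $E_i$ and, by symmetry of the argument, for the complementary sets $\bigcup_{(j,g)\neq(i,0)}(E_j+g)$. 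It also provides a representative $\tilde E_i$ with $\HH^1(\partial\tilde E_i\setminus\partial^*\tilde E_i)=0$.

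Next we fix the representative concretely. Set
\[
\tilde E_i\coloneqq\{x\in\R^2 \colon \exists\,\rho>0,\ |B_\rho(x)\setminus E_i|=0\},
\]
the measure-theoretic interior of $E_i$, which is open by definition. We need two facts.
\begin{itemize}
\item[(a)] $|E_i\triangle\tilde E_i|=0$. Points of density $1$ of $E_i$ that are not interior points must lie in the topological support of $\partial^*E_i$.
\item[(b)] $\partial\tilde E_i$ coincides with $\overline{\partial^*E_i}$, and the density estimates force $\overline{\partial^*E_i}\subset\{x \colon 0<|E_i\cap B_\rho(x)|<|B_\rho|\ \forall\rho\}$.
\end{itemize}
The standard argument (as in \cite[Prop.~12.19]{Maggi12_Book}) then gives $\HH^1(\overline{\partial^*E_i}\setminus\partial^*E_i)=0$. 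This uses the upper density bound on the perimeter from the lower density estimate applied to the complement, together with the lower bound $\Per_\phi(E_i,B_\rho(x))\ge c\rho$ and a covering argument. Combined with $|\partial\tilde E_i|=0$, which also follows from the density estimates, this yields both (a) and the boundary statement $\HH^1(\partial\tilde E_i\setminus\partial^*\tilde E_i)=0$. Boundedness of $\tilde E_i$ follows because $\tilde E_i$ is contained in the closure of the essential support of $E_i$, which is bounded since $E_i$ is essentially bounded.

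We expect the main obstacle to be the step where the density estimates for $E_i$ are used to control the complement, which is made of infinitely many translated cells. Local finiteness is exactly what reduces this to finitely many cells in each ball, so that Theorem~\ref{th:almgren} applies and each complementary chamber satisfies its own lower density bound. We would rely on this, as was already done in the proof of Theorem~\ref{th:regularity}.
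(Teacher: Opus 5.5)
Your proposal is correct and follows the paper's proof: you use the planar conclusion of Theorem~\ref{thm:existence} to obtain essentially bounded generators and a locally finite partition, and then apply Theorem~\ref{th:regularity}. Taking the measure-theoretic interior as the representative spells out the openness and boundedness that the paper leaves implicit; the upper perimeter density bound you mention is not actually needed for $\HH^1(\overline{\partial^*E_i}\setminus\partial^*E_i)=0$, since the lower perimeter density bound already suffices there.
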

\begin{proof}
    By the last conclusion of Theorem \ref{thm:existence}, we know that there is a minimizing tiling $\mathbf T\coloneqq \{E_i+ g \colon g \in G,i=1,\dots,N \}$ inducing a locally finite partition and with $E_i$ essentially bounded. Therefore, the regularity result in Theorem \ref{th:regularity} applies, giving in turn the existence of open and bounded representatives  $\tilde E_i$ satisfying the desired properties.
\end{proof}
\begin{lemma}\label{lm:connected}
Let $\phi$ be a norm in the plane $\R^2$ and let $\mathbf T\coloneqq \{E_i+ g \colon g \in G,i=1,2 \}$ be a $\phi$-isoperimetric double tiling relative to a lattice $G$. Assume that $E_1,E_2$ are open and bounded generators of $\mathbf T$ such that $\HH^1(\partial E_i\setminus\partial^*E_i)=0$, $i=1,2$. If $\R^d\setminus (\bar E_1+G)$
is connected, then $E_2$ is connected.
\end{lemma}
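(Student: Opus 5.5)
Suppose by contradiction that $E_2$ is not connected. The aim is to reassign one connected component of $E_2$ by a lattice translation so that $|E_1|$ and $|E_2|$ are unchanged but the total perimeter strictly drops.

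\textbf{Step 1 (the components of $E_2+G$).} Since $E_1,E_2$ are generators of a tiling, up to null sets
\[
\R^2\setminus(\bar E_1+G)\subset E_2+G .
\]
Because $\HH^1(\partial E_i\setminus\partial^* E_i)=0$, the boundaries have zero area. Hence the open set $U:=\R^2\setminus(\bar E_1+G)$ coincides with $E_2+G$ up to a null set. Since $U$ is assumed connected, the open set $E_2+G$ is connected up to the closed set $\partial E_2+G$, which has zero area.

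The translates $E_2+g$ are pairwise disjoint open sets. Write $E_2=A\cup B$ with $A,B$ open, nonempty and disjoint, where $A$ is one connected component and $B$ is the rest. Then $(A+G)\cup(B+G)$ is a disjoint union of open sets filling $U$ up to a null set.

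\textbf{Step 2 (a common interface).} By connectedness of $U$, the sets $A+G$ and $B+G$ cannot be separated inside $U$. So there exist $g\in G$ and a portion $\Gamma$ of $\partial A\cap\partial(B+g)$ with $\HH^1(\Gamma)>0$ consisting of reduced-boundary points. At such points $A$ lies on one side and $B+g$ on the other.

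Here the hypothesis $\HH^1(\partial E_i\setminus\partial^*E_i)=0$ is used. Without it, $A+G$ and $B+G$ could touch only along a set of zero $\HH^1$-measure, for example at isolated points. That would contradict connectedness of $U$: removing an $\HH^1$-null closed set from $U$ leaves it connected, since $U$ is a planar open set. One must check that the common boundary inside $U$ cannot be $\HH^1$-negligible, so it has positive $\HH^1$-measure and consists a.e.\ of reduced-boundary points.

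\textbf{Step 3 (the competitor).} Define the new generator
\[
E_2':=(A-g)\cup B\quad\text{(or equivalently } A\cup(B+g)\text{)}, \qquad E_1':=E_1 .
\]
Then $E_2'$ still tiles together with $E_1$, and $|E_2'|=|E_2|$ because $A-g$ and $B$ are disjoint. In the tiling, the interface $\Gamma$ now separates two pieces of the same cell $E_2'+h$. Therefore it no longer contributes, and
\[
\Per_\phi(E_2')\le \Per_\phi(E_2)-2\int_{\Gamma}\phi(\nu)\,\d\HH^1<\Per_\phi(E_2).
\]
This strict inequality uses $\phi>0$ on unit vectors. It contradicts the $\phi$-isoperimetric property of $\mathbf T$.

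\textbf{Main obstacle.} The delicate step is Step 2. Connectedness of the open set $U$ gives only topological adjacency of $A+G$ and $B+G$. We must upgrade this to a shared interface of positive length made of reduced-boundary points of both sets. My plan is to argue by contradiction. If $\HH^1(\partial(A+G)\cap\partial(B+G)\cap U)=0$, then this closed subset of $U$ has zero length, so removing it leaves $U$ connected. The disjoint open sets $A+G$ and $B+G$ then fill the complement up to boundaries of zero $\HH^1$-measure, and therefore separate it. This contradicts connectedness.
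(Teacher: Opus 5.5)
Your strategy is the same as the paper's. You split $E_2$ into disjoint open pieces, use connectedness of $U=\R^2\setminus(\bar E_1+G)$ to find a positive-length interface between one piece and a lattice translate of the other, and glue them to lower $\Per_\phi(E_2)$.

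The topological half of your Step 2 is correct. Since $U\subset\overline{A+G}\cup\overline{B+G}$ and $\overline{A+G}\cap\overline{B+G}=\partial(A+G)\cap\partial(B+G)$, a null set $K_0=\partial(A+G)\cap\partial(B+G)\cap U$ would split $U\setminus K_0$ into two disjoint nonempty relatively closed sets. This gives $\HH^1(\partial A\cap\partial(B+g))>0$ for some $g$, but for topological boundaries only.

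The gap is the next step. You conclude that $\Gamma$ consists of points with $A$ on one side and $B+g$ on the other, i.e.\ $\HH^1(\partial^*A\cap\partial^*(B+g))>0$, and implicitly $g\neq0$. The hypothesis $\HH^1(\partial E_2\setminus\partial^*E_2)=0$ only places a.e.\ such point in $\partial^*E_2\cap\partial^*(E_2+g)$. It does not say which piece carries the density there.

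For instance, let $B$ be a union of tiny disjoint disks $D_k$, with $\sum_k r_k<\infty$. Let them fill holes of $A+g$ and accumulate onto a segment $S$ of a flat interface with $A$ below and $A+g$ above. Then:
\begin{itemize}
\item the regularity hypotheses of the lemma hold;
\item $S\subset\partial A\cap\partial B$ has positive length;
\item a.e.\ point of $S$ lies in $\partial^*E_2\cap\partial^*(E_2+g)$;
\item yet $B$ has density zero on $S$.
\end{itemize}
If $\Gamma\subset S$, your translation vector is $0$, and the competitor of Step 3 is $E_2$ itself, so nothing is gained. The useful interface here, $\bigcup_k\partial D_k\subset\partial^*B\cap\partial^*(A+g)$, is not what your argument produces. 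So the strict inequality in Step 3 is not justified.

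The fix is to run Step 2 measure-theoretically. Since $|U\setminus(E_2+G)|=0$, the function $u=\chi_{A+G}$ equals $1-\chi_{B+G}$ a.e.\ in $U$. It is not a.e.\ constant there, because $A$ and $B$ are nonempty open sets. By the constancy theorem on the connected open set $U$, $|Du|(U)>0$, i.e.\ $\HH^1(\partial^*(A+G)\cap U)>0$. These points are common reduced-boundary points of $A+G$ and $B+G$ with opposite normals.

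Local finiteness and countable subadditivity then give $g_1,g_2$ with $\HH^1(\partial^*(A+g_1)\cap\partial^*(B+g_2))>0$. Moreover $g_1\neq g_2$: at a.e.\ point of $\partial^*A\cap\partial^*B$ the set $E_2$ has density one while lying on $\partial A\subset\partial E_2$, which the hypothesis forbids. The same observation gives $\Per_\phi(E_2)=\Per_\phi(A)+\Per_\phi(B)$, which your Step 3 inequality implicitly uses.

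This is what the paper's ``perturbing the curve'' is meant to achieve. It lands on a point of $\partial^*E_2'\cap\partial^*(E_2''+g)$, a jump point of the relevant characteristic function, rather than on a merely topological boundary point.
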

\begin{proof}
Suppose, by contradiction, that $E_2$ can be decomposed as the union 
of two open disjoint sets $E_2'$, $E_2''$.
Consider a point $x'\in E_2'$ and a point $x''\in E_2''$.
By the assumptions on $E_1$ we know that there exists 
a curve contained in $U=\R^d\setminus (\bar E_1+G)$ connecting $x'$ to $x''$. 
By continuity there is a point $x$, along the curve,
such that $x \in (\partial E_2' \cap \partial (E_2''+g)) \setminus \bar E_1$ for some $g\in G$.
By perturbing the curve, we can also suppose that $x\in (\partial^*E_2'\cap \partial^* (E_2''+g))\setminus \bar E_1$.
Consider $\rho>0$ such that $B_\rho(x)\cap \bar E_1=\emptyset$. 
By Theorem~\ref{th:regularity} we know that
$\HH^1(\partial E_2'\cap \partial (E_2''+g)\cap B_\rho(x))>0$.

We can hence define $F_1\coloneqq E_1,F_2 \coloneqq E_2'\cup (E_2''+g)$
and notice that $ \mathbf T'\coloneqq \{F_i+ g \colon g \in G,i=1,2 \}$ is another double tiling relative to $G$
with the same prescribed areas. However we have 
\[
\Per_\phi(F_2) \le \Per_\phi(E_2) - \HH^1(\partial^* E'_2\cap\partial^*(E_2''+g))
 < \Per_\phi(E_2),
\]
contradicting the minimality of $\mathbf T$. This concludes the proof.
\end{proof}

\section{Estimates of the $\ell_1$-perimeter of double tilings}
In this section, we develop key properties for the $\ell_1$-perimeter of double tilings.
\subsection{Preliminary lemmas}
Since, in this work, we are interested in planar sets, it will be convenient to decompose the $\ell_1$-perimeter of a set $E\subset \R^2$ of finite perimeter into a horizontal and vertical contribution. To this aim, we set
\[
    P_V(E) \coloneqq  \int_{\partial^*E} \abs{\nu_E \cdot e_1}\, d\HH^{1},\qquad P_H(E) \coloneqq  \int_{\partial^*E} \abs{\nu_E \cdot e_2}\, d\HH^{1},
\]
so that we have can rewrite $ \Per_{\ell_1}(E) = P_H(E) + P_V(E).$ A first important observation is the following simple lemma, where we denote by $\pi_1,\pi_2$ the projections respectively on the $x,y$ axes.
\begin{lemma}\label{lm:123333}
    Let $E\subset \R^2$ be a set of finite perimeter. Then we have
    \begin{equation}\label{eq:438576}
      P_H(E) \ge 2 \HH^1(\pi_1(\partial^* E)), \qquad 
      P_V(E) \ge 2 \HH^1(\pi_2(\partial^* E)).
    \end{equation}
    Finally, it also holds
    \[
        \abs{E} \le \frac14 P_H(E) \cdot P_V(E).
    \]
\end{lemma}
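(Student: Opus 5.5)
The plan is to slice $E$ by lines and count boundary crossings, using the structure theory of sets of finite perimeter on almost every slice. I treat the inequality $P_H(E)\ge 2\HH^1(\pi_1(\partial^*E))$; the vertical one follows by swapping coordinates. Note that $P_H$ integrates $|\nu_E\cdot e_2|$, so it measures the portion of $\partial^* E$ met by \emph{vertical} lines $\{x=t\}$. The tool is the coarea (slicing) formula for the rectifiable set $\partial^*E$ along the projection $\pi_1$ (Ambrosio--Fusco--Pallara, Theorem 2.93 / Section 3.11):
\[
P_H(E)=\int_{\partial^*E}|\nu_E\cdot e_2|\,\d\HH^1=\int_{\R}\HH^0\big(\partial^*E\cap\{x=t\}\big)\,\d t .
\]
Here the tangential Jacobian of $\pi_1$ restricted to $\partial^*E$ equals $|\nu_E\cdot e_2|$.

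The main step is to show that for a.e.\ $t\in\pi_1(\partial^*E)$ the slice $\partial^*E\cap\{x=t\}$ has at least two points. By the slicing theorem for BV functions, for a.e.\ $t$ the slice $E_t=\{y:(t,y)\in E\}$ has finite perimeter in $\R$. Moreover, for a.e.\ $t$ its essential boundary coincides with $(\partial^*E)_t$ up to finitely many points, and in fact exactly on a full-measure set of $t$. Since $E$ has finite perimeter and hence finite measure (or finite-measure complement), a.e.\ $E_t$ has finite length or co-finite length. Then $E_t$ is a finite union of intervals, so its boundary consists of an even number of points. Whenever this boundary is nonempty it therefore contains at least two points.

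To conclude, if $t\in\pi_1(\partial^*E)$ lies in the good full-measure set, then $(\partial^*E)_t\neq\emptyset$, so the count is at least $2$. This gives
\[
P_H(E)\ge\int_{\pi_1(\partial^*E)}2\,\d t=2\HH^1(\pi_1(\partial^*E)).
\]
The main obstacle is the measure-theoretic identification of the slice of $\partial^*E$ with the boundary of the slice $E_t$ for a.e.\ $t$, and the measurability of $\pi_1(\partial^*E)$. For measurability I would replace $\partial^*E$ by a Borel ($\sigma$-compact) rectifiable representative, or simply interpret the estimate with outer measure. A subtle case is a slice where $\partial^*E$ meets the line but $E_t$ is trivial; this happens only for a null set of $t$, which is exactly what the slicing theorem provides.

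For the area bound, let $A=\pi_1(E^{(1)})$ and $B=\pi_2(E^{(1)})$, projections of the set of density-one points (up to null sets). For a.e.\ $t\in A$ the slice $E_t$ is a nonempty, finite-measure subset of $\R$, so $\partial E_t\neq\emptyset$ and $t\in\pi_1(\partial^*E)$ up to null sets. Hence $\HH^1(A)\le\HH^1(\pi_1(\partial^*E))\le \tfrac12P_H(E)$, and symmetrically $\HH^1(B)\le\tfrac12 P_V(E)$. Since $E\subset A\times B$ up to a null set,
\[
|E|\le \HH^1(A)\,\HH^1(B)\le \tfrac14P_H(E)P_V(E).
\]
If $|E|=\infty$ the statement is vacuous or false, so I assume $|E|<\infty$, as is implicit in the lemma.
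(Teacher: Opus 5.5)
Your proof is correct, but it takes a genuinely different route from the paper's.

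\textbf{The paper's route.} It relies on the planar structure theory of Ambrosio--Caselles--Masnou--Morel. It splits $E$ into indecomposable components $E_i$ and fills their holes to get simple sets whose reduced boundary is a rectifiable Jordan curve $\Gamma_i$. It then uses the elementary fact that a closed curve covers its projection at least twice, $P_H(\tilde E_i)\ge 2\HH^1(\pi_1(\Gamma_i))$. The area bound is proved component by component, via the bounding rectangles $R_i$ and $\sum_i P_H(E_i)P_V(E_i)\le \sum_i P_H(E_i)\sum_i P_V(E_i)$.

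\textbf{Your route.} You slice instead:
\begin{itemize}
\item the coarea formula gives $P_H(E)=\int_{\R}\HH^0\big((\partial^*E)_t\big)\,dt$;
\item the BV slicing theorem identifies $(\partial^*E)_t$ with the reduced boundary of the one-dimensional set $E_t$ for a.e.\ $t$;
\item finiteness or co-finiteness of $\HH^1(E_t)$ forces an even number of endpoints;
\item the area bound comes globally from $\abs{E}\le \HH^1(A)\,\HH^1(B)$ with $A,B$ essential projections.
\end{itemize}

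\textbf{What each buys.} Your approach avoids the Jordan-curve structure theorem entirely. Your projection estimate extends directly to higher dimensions. It also covers sets with $\abs{\R^2\setminus E}<\infty$, whereas the paper's Step 1, with bounded components, tacitly assumes $\abs{E}<\infty$. The paper's approach is more geometric and gives the slightly finer intermediate bound $\abs{E}\le \frac14\sum_i P_H(E_i)P_V(E_i)$. You are also right that the area inequality needs $\abs{E}<\infty$: the complement of a unit square has $P_H=P_V=2$ and infinite area.

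\textbf{One step needs a line of justification.} You assert that $\HH^1(E_t)>0$ for a.e.\ $t\in A=\pi_1(E^{(1)})$. This is true, but it is not immediate. Suppose $T\subset A$ has positive measure and $\HH^1(E_t)=0$ for $t\in T$. Then $\abs{E\cap (T\times\R)}=0$. Take any Lebesgue density point $t_0\in T$ of $T$. Every point $(t_0,y)$ then has density $0$ in $E$, so it is not in $E^{(1)}$, a contradiction.

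A simpler fix avoids density points altogether. Set $A=\{t:\HH^1(E_t)>0\}$ and define $B$ analogously from the horizontal slices. Fubini gives $\abs{E\setminus (A\times B)}=0$ directly. The rest of your argument then goes through unchanged.
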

\begin{proof}
    We subdivide the proof into different steps.
    
    \noindent\textsc{Step 1}. Let us consider $(E_i)_{i\in I}$ the family of indecomposable components of $E$, namely the collection of Borel subsets $E_i\subset E$ satisfying
    \[
        \abs{E}=\sum_{i\in I}\abs{E_i},\qquad \Per_{\varphi}(E)= \sum_{i \in I}\Per_\varphi(E_i).
    \]
    See \cite{AmbrosioCasellesMasnouJean01} for the existence of such a decomposition. By the perimeter-diameter estimates for planar indecomposable sets of finite perimeter (see, e.g.\ \cite[Lemma 2.13]{DayrensMasnouNovagaPozzetta22}), we also deduce that $E_i$ is (essentially) bounded and, up to change of representative, we can assume that $E_i$'s are bounded sets. For all $i\in I$, we can define $\tilde E_i = {\rm sat}(E_i)$, where the saturation ${\rm sat}(E)$ of a set $E$ is the union of all the bounded indecomposable components of $E^c$ (i.e.\ the union of $E$ and its \emph{holes}). Since $E_i$ is indecomposable, then $\tilde E_i$ is simple. Hence, by \cite{AmbrosioCasellesMasnouJean01}, there exists a Jordan curve $\gamma_i$ such that $\partial^*\tilde E_i = \Gamma_i\coloneqq {\rm Im}(\gamma_i)$ (modulo $\HH^1$-a.e.\ equality).

    \noindent\textsc{Step 2}. We show the first in \eqref{eq:438576}, the second being analogous. By the previous discussion, we can estimate
    \begin{align*}
        P_H(E)=\sum_{i\in I}P_H(E_i)\ge \sum_{i\in I}P_H(\tilde E_i).
    \end{align*}
    Since $\gamma_i$ is a Lipschitz loop spanning $\partial\tilde E_i$ modulo $\HH^1$-a.e., we deduce
    \[
        P_H(\tilde E_i)\ge 2 \HH^1(\pi_1(\Gamma_i)),\qquad\forall i \in I.
    \]
    The sought estimate follows, by sub-additivity and noticing  
    \[
        \sum_{i\in I}\HH^1(\pi_1(\Gamma_i)) \ge \HH^1(\cup_{i\in I}\pi_1(\Gamma_i)) = \HH^1(\pi_1(\cup_{i\in I}\Gamma_i)) = \HH^1(\pi_1( \cup_i\partial^*\tilde E_i)) = \HH^1(\pi_1(\partial^* E)),
    \]
    having used, lastly, that the \emph{holes} in each $E_i$ do not contribute to the total length of the projection.

    \noindent\textsc{Step 3}. For all $i \in I$, we can then denote $R_i$ the smallest rectangle containing $E_i$ aligned with the coordinate axes, and we infer, by the previous step, that
    \[
        \abs{E_i}\le \abs{R_i},\qquad P_H(E_i)\ge P_H(R_i),\qquad P_V(E_i)\ge P_V(R_i).
    \]
    Indeed, the first is obvious while the claimed estimates on the perimeter are a direct consequence of \eqref{eq:438576}. Therefore, we conclude the proof observing
    \begin{align*}
        \abs{E}=\sum_{i\in I}\abs{E_i}&\le \sum_{i\in I}\abs{R_i} = \frac14 \sum_{i \in I}P_H(R_i)P_V(R_i) \le \frac14 \sum_{i \in I}P_H(E_i)P_V(E_i)\\
        &\le \frac14 \sum_{i \in I}P_H(E_i)\sum_{i \in I}P_V(E_i) =  \frac14 P_H(E)\cdot P_V(E).
    \end{align*}
\end{proof}
We next show a result aiming at characterizing rectangular sets. This will be needed to deduce the uniqueness part of our main results.
\begin{lemma}\label{lm:43io0568}
Let $h,b >0$ and suppose that $E\subset \R^2$ is an open set of finite perimeter with $\HH^1(\partial E\setminus \partial^*E)=0$, $\abs{E}=hb$, $P_H(E)=2h$, $P_V(E)=2b$.
If $\bar E\supset\{0\}\times [0,b]$ then a horizontal translation of $\bar E$ is $[0,h]\times [0,b]$.
\end{lemma}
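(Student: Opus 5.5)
The plan is to squeeze the equality case out of Lemma \ref{lm:123333}. By \eqref{eq:438576} we have
\[
2h = P_H(E)\ge 2\HH^1(\pi_1(\partial^*E)),\qquad 2b = P_V(E)\ge 2\HH^1(\pi_2(\partial^*E)).
\]
Since $E$ is open and bounded with $\HH^1(\partial E\setminus\partial^*E)=0$, we may replace $\partial^*E$ by $\partial E$ in these projections. Every horizontal line meeting $E$ must meet $\partial E$ on both sides, so $\pi_2(E)\subset\pi_2(\partial E)$, and likewise for vertical lines. The projections of $\bar E$ therefore have measure at most $h$ and $b$.

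Next I use $\bar E\supset\{0\}\times[0,b]$. It gives $\pi_2(\bar E)\supset[0,b]$, and combined with the previous bound, $\pi_2(E)$ is contained in $[0,b]$ up to a null set. Since $E$ is open, $\pi_2(E)$ is open and hence $\pi_2(E)\subset(0,b)$. The set $\pi_1(E)$ is open with measure at most $h$. I then run Step 3 of Lemma \ref{lm:123333} on the indecomposable components $E_i$ with bounding rectangles $R_i$:
\[
hb=|E|\le\sum_i|R_i|\le\tfrac14\sum_i P_H(E_i)P_V(E_i)\le\tfrac14 P_H(E)P_V(E)=hb .
\]
So every inequality is an equality. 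The last one, $\sum_i a_ib_i=\sum_i a_i\sum_i b_i$ with positive terms, forces a single component. Hence $E$ is indecomposable and $|E|=|R|$, where $R$ is the bounding rectangle, so $E$ equals $R$ up to a null set. Also $P_H(E)=P_H(R)$ and $P_V(E)=P_V(R)$, so $R$ has side lengths $h$ and $b$ and $|E|=hb$ is consistent.

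It remains to locate $R$ and pass to closures. From $\pi_2(\bar E)\supset[0,b]$ and the side length $b$, we get $R=[t,t+h]\times[0,b]$ for some $t$. The main obstacle is the step from ``$E=R$ a.e.'' to $\bar E=R$, which must use openness and $\HH^1(\partial E\setminus\partial^*E)=0$. First, $E\subset\bar R$ up to null sets, and an open set contained a.e.\ in $R$ lies in the interior of $R$. Second, if $\bar E$ were strictly smaller than $R$, then $R\setminus\bar E$ would be a nonempty open set of positive measure, contradicting $|E|=|R|$. Hence $\bar E=R$. Finally, since $\{0\}\times[0,b]\subset\bar E=R$, we get $t\le 0\le t+h$. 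A horizontal translation by $-t$ then maps $\bar E$ onto $[0,h]\times[0,b]$.
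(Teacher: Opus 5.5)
Your proof is correct, but it takes a different route from the paper.

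The paper argues by horizontal slicing. It sets $u(t)=\HH^1(E\cap\{y=t\})$ and bounds $u(t)\le \HH^1(\pi_1(E))\le \frac12 P_H(E)=h$ via \eqref{eq:438576}. It then uses $P_V(E)=2b$ together with $\bar E\supset\{0\}\times[0,b]$ to show that $u$ vanishes off $[0,b]$. From $hb=\int_0^b u$ it gets $u\equiv h$ a.e., so almost every slice fills $\pi_1(E)$, and it reads off the rectangle from this.

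You instead take the equality case in Step 3 of Lemma \ref{lm:123333}. The product inequality $\sum_i a_ib_i\le \sum_i a_i\sum_j b_j$ forces a single indecomposable component, so $E$ coincides a.e.\ with its bounding rectangle $R$. You then pass to closures using only the openness of $E$.

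What each route buys:
\begin{itemize}
\item The paper's argument is more elementary, relying only on Fubini and the projection bound. However, its last sentence tacitly needs $\pi_1(E)$ to be (up to a null set) a single interval. This follows from $P_V(E)=2b$ but is not stated, and the closure step is left implicit.
\item Your route gets connectedness and rectangularity at once. The price is relying on the indecomposable decomposition and Jordan-curve structure already built into Lemma \ref{lm:123333}.
\end{itemize}

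Two small remarks on your write-up.
\begin{itemize}
\item Your first paragraph (the projection bounds) is never actually used. The facts $\bar E\subset R$ and $\{0\}\times[0,b]\subset\bar E$ already give $\pi_2(R)=[0,b]$. Deleting that paragraph also removes your assumption that $E$ is bounded, which is not among the hypotheses. It also shows that $\HH^1(\partial E\setminus\partial^*E)=0$ is not needed for your argument, despite your comment that the closure step ``must use'' it.
\item In the closure step, the nonempty open set of positive measure should be $(\mathrm{int}\,R)\setminus\bar E$, not $R\setminus\bar E$, which is only relatively open in $R$. The conclusion is unaffected.
\end{itemize}
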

\begin{proof}
Consider the Borel function $u(t):=\HH^1(E\cap \{y=t\})$.
By~\eqref{eq:438576}, we have 
\[
2u(t) 
    = 2\HH^1(\pi_1(E\cap \{y=t\})) 
    \le 2\HH^1(\pi_1(E))
    \le P_H(E)
    = 2h.
\]
On the other hand, we have
\[
 hb = \abs{E}
 = \int_{-\infty}^{+\infty} u(y)\, dy 
 = \int_0^b u(y)\, dy,
\]
where, the last equality follows from $2b = P_V(E) = 2\HH^1(\pi_2(E)) \ge 2\HH^1(\pi_2(\{0\}\times [0,b])) = 2b$, which guarantees that 
$u(y)=0$ when $y\notin [0,b]$.
Putting together we deduce $u(y)=h$ for a.e.\ $y\in [0,b]$.
Since $\HH^1(\pi_1(E))=h$ while $\HH^1(E\cap \{y=t\})=h$ for 
all $t\in[0,b]$ we conclude that $\bar E$ is actually a rectangle.
\end{proof}
\subsection{Estimates for the $\ell_1$-perimeter}
In this part, we produce effective estimates on $2$-tilings that are periodic with respect to rectangular lattices. We shall always assume that $\mathbf T$ is a tiling generated by open sets $E$ satisfying $\HH^1(\partial E_i \setminus \partial^* E_i)=0$. As the following lemmas will be invoked on the $\ell_1$-isoperimetric tilings given by Corollary \ref{cor:planar regularity}, this topological regularity will be satisfied.

We first show a useful lower bound on the vertical and the horizontal anisotropic perimeter of the generators.
\begin{lemma}\label{lm:32323}
Let $\mathbf T$ be a $2$-tiling relative to a rectangular lattice 
$G$ in $\R^2$ generated by two vectors $(a,0)$, $(0,b)$, $a,b>0$.
Let $E_1$, $E_2$ be two generators of $\mathbf T$ and suppose they are open set with $\HH^1(\partial E_i \setminus \partial^* E_i)=0$. Then, it holds
\[
 P_H(E_1) + P_H(E_2) \ge 2 a, \qquad 
 P_V(E_1) + P_V(E_2) \ge 2 b.
\]
\end{lemma}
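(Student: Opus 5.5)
By the symmetry $x\leftrightarrow y$ I will only prove the bound $P_H(E_1)+P_H(E_2)\ge 2a$. Recall $P_H(E)=\int_{\partial^*E}|\nu_E\cdot e_2|\,d\HH^1$. By the coarea formula applied to the projection $\pi_1$ restricted to the rectifiable set $\partial^*E$, this equals
\[
\int_{\R}\HH^0\big(\partial^*E\cap\{x=s\}\big)\,ds .
\]
So it suffices to show that for a.e.\ $s$ in a period interval $[0,a)$, the vertical lines $\{x=s+ka\}$, $k\in\Z$, together meet $\partial^*E_1\cup\partial^*E_2$ in at least two points. Summing over translates, and using that the periodic copies of each $E_i$ have the same $P_H$, will then give $P_H(E_1)+P_H(E_2)\ge 2a$.

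\textbf{Reduction to a line-counting statement.} Work in the cylinder $C=\R^2/\langle (0,b)\rangle$, or equivalently count intersections of the periodic tiling boundary with the vertical line $\ell_s=\{x=s\}$ in one horizontal period. Write $\Sigma\coloneqq\bigcup_{i,g}\partial^*(E_i+g)$. The periodic structure gives
\[
P_H(E_1)+P_H(E_2)=\int_0^a \frac{1}{2}\,\#\big(\Sigma\cap\ell_s\cap([0,a)\times[0,b))\big)\cdot 2\,ds
\]
up to normalization; each interface point counts once for each of the two adjacent tiles. More carefully, the left-hand side equals $\int_0^a\sum_{k}\sum_i\#(\partial^*(E_i+(ka,0))\cap\ell_s)$ restricted to one vertical period. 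Hence I need the following: for a.e.\ $s$, the segment $\ell_s\cap\{0\le y<b\}$, taken modulo $b$ as a circle, meets the boundaries $\partial^*E_i$ of the translates at least twice, counted per tile. Since each point of $\Sigma$ belongs to the boundary of exactly two tiles, this is at least $2\times$(number of interface points on the circle), and one point would already give $2$. So it is enough to show that the circle $\ell_s/b\Z$ is not, up to null sets, entirely contained in a single tile $E_i+g$.

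\textbf{Key step: excluding a single tile covering a full vertical line.} Suppose $\ell_s$ is contained, up to $\HH^1$-null sets, in $E_i+g$ for some $i,g$. Since $E_i$ is open and bounded (by the standing assumptions, as they come from Corollary~\ref{cor:planar regularity}, or in general by essential boundedness), a full vertical line cannot lie in one bounded tile, which is a contradiction. Without boundedness, the line would lie in infinitely many translates $E_i+g+(0,kb)$, which are pairwise disjoint; a single connected open segment cannot lie in two disjoint open sets. The generic slice of an open set with $\HH^1(\partial E\setminus\partial^*E)=0$ is, for a.e.\ $s$, a finite union of open intervals whose endpoints lie in $\partial^*E$. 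So on the circle $\ell_s/b\Z$ the union of the tiles' slices covers everything except finitely many points, and these endpoints are forced to exist because no single slice is the whole line. Therefore the number of boundary points is $\ge 1$ per period, and each such point is counted by two tiles; alternatively, a nonempty proper open subset of the circle has at least two endpoints, which also gives $\ge 2$. Integrating over $s\in[0,a)$ yields $2a$.

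\textbf{Main obstacle.} The delicate point is the measure-theoretic slicing: I must make sure that for a.e.\ $s$ the slice of $\partial^*E_i$ coincides with the topological boundary of the slice of $E_i$, and that the coarea formula counts exactly these points. I would handle this with the standard slicing theorem for sets of finite perimeter, which says that for a.e.\ $s$ the slice $(E_i)_s$ has finite perimeter on the line with $\partial^*(E_i)_s=(\partial^*E_i)_s$. This is combined with $\HH^1(\partial E_i\setminus\partial^*E_i)=0$, which implies that its projection is null, so that for a.e.\ $s$ the topological and reduced boundaries agree on $\ell_s$.
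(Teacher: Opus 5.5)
Your proof is correct, but it gets the factor $2$ from a different mechanism than the paper.

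\emph{The paper's route.} It only uses that every vertical line meets $(\partial E_1\cup\partial E_2)+G$, so that $\pi_1(\partial E_1)\cup\pi_1(\partial E_2)$ covers $[0,a]$ modulo $a$. It then obtains the factor $2$ \emph{set by set} from \eqref{eq:438576} in Lemma~\ref{lm:123333}, i.e.\ from the Jordan-curve structure of the boundaries of the indecomposable components: each vertical line in the shadow of a component is crossed twice.

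\emph{Your route.} You write $P_H$ via the coarea formula as an integral of counting functions on vertical lines. You take the factor $2$ from the \emph{partition}: a.e.\ interface point lies in the reduced boundary of exactly two tiles, so a single interface point per vertical period already contributes $2$.

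\emph{What each buys.} The paper's route is economical in context, since Lemma~\ref{lm:123333} is needed anyway for Proposition~\ref{prop:326576} and Lemma~\ref{prop:39899}, and it needs no a.e.-in-$s$ bookkeeping. Yours bypasses the indecomposable decomposition and the Jordan curves, so it is not tied to the plane. It also yields the identity $P_H(E_1)+P_H(E_2)=2\int_0^a\#\big(\Sigma\cap(\{s\}\times[0,b))\big)\,ds$, which makes the equality case transparent.

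Some details should be tidied.
\begin{enumerate}
\item Your first display has the wrong normalization. The correct bookkeeping is that the integrand equals $\sum_p m(p)$ over interface points $p$ in one vertical period, where $m(p)$ is the number of tiles whose reduced boundary contains $p$.
\item The property $m(p)=2$ holds only for $\HH^1$-a.e.\ $p\in\Sigma$. This is enough, because the exceptional set projects onto a null set of $s$.
\item Your alternative remark, that a nonempty proper open subset of the circle has two endpoints, is false as stated: the circle minus one point is a counterexample. Drop it, or justify it through your own slicing observation that for a.e.\ $s$ every topological endpoint on $\ell_s$ is a jump point of the one-dimensional slice.
\item You do not need boundedness of the generators, which the lemma does not assume, nor the claim that the slices cover $\ell_s$ up to finitely many points. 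If $\ell_s$ met no $\partial(E_i+g)$, every slice $(E_i+g)\cap\ell_s$ would be relatively clopen in $\ell_s$, hence empty or all of $\ell_s$. For a.e.\ $s$ the tiles cover $\ell_s$ up to an $\HH^1$-null set, so some slice would be all of $\ell_s$. This is excluded because $E_i+g$ and $E_i+g+(0,b)$ are disjoint open sets.
\end{enumerate}
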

\begin{proof}
Since a line cannot be contained in the interior of $(E_1\cup E_2) + G$ (the union of the translations of the interiors of the tiles), every line must 
touch $\partial \mathbf T \coloneqq \cup_{i=1}^N \partial^*E_i$. This means that given any vertical line $v$ there exists $g\in G$ such
that $(v+g) \cap (\partial E_1 \cup \partial E_2) \neq \emptyset$.
By taking the lines $\{x=t\}$ with $t\in [0,a]$ we notice that the projection 
of $(\partial E_1 \cup \partial E_2) + a \mathbb Z$ on the $x$-axis, covers the whole segment $[0,a]$.
Hence, by~\eqref{eq:438576}, we have
\[
P_H(E_1) + P_H(E_2) \ge 2 (\HH^1(\pi_1(\partial E_1))+ \HH^1(\pi_2(\partial E_2)))
\ge 2 a.
\]
The second conclusion can be obtained similarly.
\end{proof}
Next, we show a lower bound on the $\ell_1$-perimeter of a double tiling when one generator is touched by every horizontal and vertical line. 
\begin{proposition}
\label{prop:326576}
Let $G$ be the rectangular lattice in $\R^2$ generated by two vectors $(a,0)$, $(0,b)$, $a,b>0$. 
Suppose that $\mathbf T$ is a 2-tiling relative to $G$
generated by two open sets $E_1,E_2$ with $\HH^1(\partial E_i\setminus \partial^*E_i)=0$ for $i=1,2$. Suppose that every horizontal line and every vertical line touches $\bar E_2 + G$.
Then
\begin{equation}
\label{eq:cost32}
\Per_{\ell_1}(\mathbf T) \ge a + b + 2 \sqrt{\abs{E_1}}.
\end{equation}

Moreover, if equality holds in~\eqref{eq:cost32} then
$\sqrt{\abs{E_1}} < \min\{a,b\}$ and
$\mathrm T$ is a double tiling generated by a square of area $\abs{E_1}$ and chipped rectangle obtained from a translation of $[0,a]\times [0,b]$.
\end{proposition}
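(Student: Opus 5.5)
The plan is to split the perimeter of the tiling as
\[
\Per_{\ell_1}(\mathbf T)=\tfrac12\big(P_H(E_1)+P_V(E_1)+P_H(E_2)+P_V(E_2)\big)
\]
and estimate the $E_1$-part and the $E_2$-part separately.

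\textbf{Bound on the $E_2$-part.} Every vertical line $\{x=t\}$ meets $\bar E_2+G$. Since $E_2$ is open and bounded with $\HH^1(\partial E_2\setminus\partial^*E_2)=0$, I want to show that for almost every $t$ such a line meets $\partial^*E_2+G$. If the line meets $\bar E_2+g$, then it either crosses $\partial E_2+g$, or it lies inside $E_2+g$, which is impossible because $E_2$ is bounded. This gives $\pi_1(\partial^*E_2)+a\Z\supset[0,a]$ up to a null set, so $\HH^1(\pi_1(\partial^*E_2))\ge a$. Lemma \ref{lm:123333} then yields $P_H(E_2)\ge 2a$, and symmetrically $P_V(E_2)\ge 2b$. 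For the other generator, the Wulff inequality \eqref{eq:isoperimetrica quadrata} gives $P_H(E_1)+P_V(E_1)\ge 4\sqrt{|E_1|}$. Adding the two bounds and halving gives \eqref{eq:cost32}.

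\textbf{Equality case.} If equality holds in \eqref{eq:cost32}, then every inequality above is an equality. First, $\Per_{\ell_1}(E_1)=4\sqrt{|E_1|}$, so by the rigidity in \eqref{eq:isoperimetrica quadrata}, $E_1$ is an axis-aligned square $Q$ of side $s=\sqrt{|E_1|}$. Second, $P_H(E_2)=2a$, $P_V(E_2)=2b$ and $\HH^1(\pi_1(\partial^*E_2))=a$, $\HH^1(\pi_2(\partial^*E_2))=b$.

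Next I would use $|E_2|=ab-s^2$ together with $\bar E_1\cup\bar E_2$ being a fundamental domain. Let $R$ be the smallest axis-aligned rectangle containing $E_2$; by \eqref{eq:438576} it has side lengths at most $a$ and $b$. The claim is that $E_2\cup Q$ fills $R$, with $R=[0,a]\times[0,b]$ up to translation.

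To prove this I would adapt the slicing argument of Lemma \ref{lm:43io0568}. Set $u(t)=\HH^1(E_2\cap\{y=t\})$. The inequality $P_H(E_2)=2a$ forces, for a.e.\ $t$, exactly two boundary crossings on each vertical line modulo $a$. Hence each vertical slice of $E_2$ is a single interval modulo periodicity, and the same holds for horizontal slices.

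The tiling constraint makes the vertical slice of $E_2\cup Q$ over each $x$ have total length $b$ modulo $b$. In the absence of $Q$, the slices of $E_2$ would be full. Two crossings per line then mean that $Q$ must sit as a notch at a corner of the rectangle. Otherwise $Q$ would be an interior hole, which adds perimeter, or would cut some slice into two pieces.

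\textbf{Strict inequality $s<\min\{a,b\}$.} We have $s\le\min\{a,b\}$ because $Q\subset$ a fundamental domain of $G$. If $s=b$, the squares $Q+G$ would stack into a full vertical strip, and $E_2$ would be a rectangle $(a-b)\times b$. Some horizontal line would then have to avoid $\bar E_2+G$, or the perimeter would be strictly larger, contradicting the hypothesis or the equality.

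\textbf{Main obstacle.} The hardest step is making rigorous the claim that $E_2$ is a chipped rectangle. I would try to show that $\bar E_2\cup\bar Q$, after reassembling pieces by translations in $G$ as in Theorem \ref{thm:existence}, satisfies the hypotheses of Lemma \ref{lm:43io0568}: area $ab$, $P_H=2a$, $P_V=2b$, and containing a full vertical segment of length $b$. That lemma then forces it to be a rectangle, and removing $Q$ leaves the chipped rectangle.
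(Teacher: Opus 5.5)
Your proof of the inequality \eqref{eq:cost32} is correct and is the same as the paper's. The lines meeting $\bar E_2+G$ force the projections of $\partial^*E_2$ to cover $[0,a]$ and $[0,b]$ modulo the lattice, so \eqref{eq:438576} gives $\Per_{\ell_1}(E_2)\ge 2a+2b$, and the Wulff inequality \eqref{eq:isoperimetrica quadrata} handles $E_1$.

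The equality case, however, has a genuine gap. The step ``let $R$ be the smallest axis-aligned rectangle containing $E_2$; by \eqref{eq:438576} it has side lengths at most $a$ and $b$'' does not follow. Even with equality, \eqref{eq:438576} only controls the measure $\HH^1(\pi_1(\partial^*E_2))$, not the diameter of $\pi_1(E_2)$. A priori $E_2$ could consist of several pieces lying in different period cells, with projections of total measure $a$ and $b$ but an arbitrarily large bounding box. Everything after that step rests on this confinement: the ``notch at a corner'' dichotomy and the reduction to Lemma \ref{lm:43io0568}. The reduction is moreover circular. Its hypotheses for $\bar E_2\cup\bar Q$, namely $P_H=2a$, $P_V=2b$ and containing a full vertical segment of length $b$, hold only if $Q$ is glued to $E_2$ along a full horizontal and a full vertical edge. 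That is exactly the chipped-rectangle geometry you are trying to prove.

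The missing idea is connectedness of $E_2$, followed by an area count. The paper gets connectedness from Lemma \ref{lm:connected}: when $\sqrt{|E_1|}<\min\{a,b\}$, the closed squares $\bar E_1+G$ are pairwise disjoint, so their complement is connected. The lemma tacitly uses that $\mathbf T$ is isoperimetric, which holds where the proposition is applied. Then $I=\pi_1(E_2)$ and $J=\pi_2(E_2)$ are intervals of lengths exactly $a$ and $b$, so $E_2\subset (I\times J)\setminus(E_1+G)$. Since $I\times J$ is a fundamental domain, the right-hand side has area $ab-|E_1|=|E_2|$, which forces equality. It then only remains to locate the square relative to $I\times J$: if it sat strictly inside $I\times J$ in one direction, some of its sides would be interior to $E_2$ and the perimeter would exceed $2a+2b$.

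Your observation that equality forces, for a.e.\ $t$, exactly two points of $\partial^*E_2+G$ on each vertical and horizontal line is correct. It could replace Lemma \ref{lm:connected}, and would not need minimality of $\mathbf T$. Here is how it would go:
\begin{itemize}
\item Using $\HH^1(\partial E_2\setminus\partial^*E_2)=0$, each such line class meets $E_2$ in exactly one open interval.
\item Hence distinct components of $E_2$ have essentially disjoint $x$-projections $X_k$ modulo $a$ and $y$-projections $Y_k$ modulo $b$.
\item In $\R^2/G$, every $X_k\times Y_j$ with $k\neq j$ must then be covered by the single square $Q$.
\item With two components of positive area this gives $a\le\sqrt{|E_1|}$, a contradiction.
\end{itemize}
This argument has to be carried out, though, and it is not in your proposal.

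A minor slip: when $s=b$ the squares $Q+G$ form vertical strips. It is the vertical lines inside these strips, not horizontal lines, that miss $\bar E_2+G$.
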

\begin{proof}
By the assumptions on $E_2$, we have $\HH^1(\pi_1(\partial^* E_2)) = a$, $\HH^1(\pi_2(\partial^* E_2))=b$. Therefore, 
\eqref{eq:438576} yields
\begin{equation}\label{eq:aabb}
\begin{aligned}
  \int_{\partial^*E} \abs{\nu_E \cdot e_2} \,\d\HH^1\ge  a, \qquad
  \int_{\partial^*E} \abs{\nu_E \cdot e_1} \,\d\HH^1\ge  b,
\end{aligned}
\end{equation}
whence
\[
  \Per_{\ell_1}(E_2) \ge 2a + 2b.
\]
On the other hand, by the $\ell_1$-isoperimetric inequality \eqref{eq:isoperimetrica quadrata}, we have 
$\Per_\phi(E_1)\ge 4\sqrt{\abs{E_1}}$, with equality holding if 
and only if $E_1$ is equivalent to the Wulff shape, namely a square with edges of length $\sqrt{\abs{E_1}}$ and parallel to the coordinate axes. The first conclusion is therefore proven.

Suppose now that equality holds in~\eqref{eq:cost32}.
Let $I=\pi_1(E_2)$, $J=\pi_2(E_2)$
so that $\HH^1(I) = a$, $\HH^1(J)=b$
while $\Per_\phi(E_2)=2a+2b$.
Since $E_2$ is connected 
(by Lemma~\ref{lm:connected}) 
both $I$ and $J$ must be connected too.
Hence $E_2$ is contained in $F := (I\times J)\setminus (E_1+G)$ which is a set of 
area $ab-\abs{E_1}$. Since $ab-\abs{E_1} = \abs{E_2}$
we conclude that $E_2 = F$ (up to a negligible set).

Without loss of generality suppose now that $E_1=(0,\ell)^2$ and let $I=(\alpha,\alpha+a)$, $J=(\beta,\beta+b)$, with $0\le \alpha <a$ and $0\le \beta < b$, $\ell:=\sqrt{\abs{E_1}}$.
We now claim that $\alpha\le\ell$ and $\beta\le \ell$.
In fact if $\alpha>\ell$ we would have $\Per_\phi(E_2)\ge 2a + 2b + 3 \ell$ because at least 
three sides of the square $E_1$ would be \emph{inside} $E_2$. An analogous argument shows that $\beta>\ell$ cannot occur either, concluding thus the proof.
\end{proof}
Finally, we show a last bound on the $\ell_1$-perimeter of a double tiling in the situation in which the two generators are touched by two (possibly different) vertical lines.
\begin{lemma}
\label{prop:39899}
Let $G$ be the rectangular lattice in $\R^2$ generated by two vectors $(a,0)$, $(0,b)$, $a,b>0$. 
Suppose that $\mathbf T$ is an isoperimetric 2-tiling relative to $G$
generated by two open sets $E_1,E_2$ with $\HH^1(\partial E_i\setminus \partial^*E_i)=0$ for $i=1,2$.
Suppose there exists a vertical line contained in $\bar E_1+G$
and another vertical line contained in $\bar E_2 + G$.
Then
\begin{equation}
    \Per_{\ell_1}(\mathbf T) \ge a + 2b.
\label{eq:cost rectangles}
\end{equation}
Moreover, if equality holds in \eqref{eq:cost rectangles}, then the double 
tiling $\mathbf T$ is generated by two adjacent rectangles of area respectively $\abs{E_1},ab-\abs{E_2}$ whose union has closure equal to $[0,a]\times[0,b]$.
\end{lemma}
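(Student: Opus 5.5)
The plan is to split the perimeter into horizontal and vertical contributions, $\Per_{\ell_1}(\mathbf T)=\frac12\sum_i (P_H(E_i)+P_V(E_i))$. We bound the vertical part by Lemma~\ref{lm:32323} and the horizontal part by a projection argument based on the two vertical lines. Let $v_1=\{x=s_1\}\subset \bar E_1+G$ and $v_2=\{x=s_2\}\subset\bar E_2+G$. By Lemma~\ref{lm:32323} we already have $P_V(E_1)+P_V(E_2)\ge 2b$. It therefore suffices to show
\[
P_H(E_1)+P_H(E_2)\ge 4a,
\]
which gives $\Per_{\ell_1}(\mathbf T)\ge \frac12(4a+2b)=2a+b$. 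The statement, however, claims $a+2b$. So the roles must be arranged as follows: vertical lines in each cell force every \emph{horizontal} line to cross the boundary of \emph{both} cells.

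Concretely, fix a horizontal line $\{y=t\}$. It meets $v_1$ at a point of $\bar E_1+g$ and $v_2$ at a point of $\bar E_2+g'$. Periodicity in $x$ (period $a$) means that on every horizontal segment of length $a$ the line passes from $\bar E_1+G$ to $\bar E_2+G$ and back. Hence each of $E_1+G$ and $E_2+G$ has a boundary point on every such line. Moreover $\{y=t\}$ is not contained in $E_1+G$, since it meets $E_2+G$ (using openness and $\HH^1(\partial E_i\setminus\partial^*E_i)=0$, up to a null set of $t$). By \eqref{eq:438576} applied to $E_i$, and after reducing modulo $b$, the projections $\pi_2(\partial^*E_1)+b\Z$ and $\pi_2(\partial^*E_2)+b\Z$ each cover $\R$. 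This yields $P_V(E_1)\ge 2b$ and $P_V(E_2)\ge 2b$, so $P_V(E_1)+P_V(E_2)\ge 4b$. Together with $P_H(E_1)+P_H(E_2)\ge 2a$ from Lemma~\ref{lm:32323}, we obtain $\Per_{\ell_1}(\mathbf T)\ge \frac12(2a+4b)=a+2b$. (The notation $P_V$ is the integral of $|\nu\cdot e_1|$, i.e.\ the contribution of vertical edges, controlled by the $y$-projection; this is consistent.)

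For the equality case, all the inequalities must be equalities: $P_V(E_i)=2b$, $\HH^1(\pi_2(\partial^* E_i))=b$, and $P_H(E_1)+P_H(E_2)=2a$. Since $\mathbf T$ is isoperimetric, Lemma~\ref{lm:connected} shows that the $E_i$ are connected, provided the complement condition can be checked. If that is delicate, we instead argue with indecomposable components as in Lemma~\ref{lm:123333}, where equality forces a single component with simply connected saturation. Next, equality in the horizontal bound means each horizontal line meets $\partial\mathbf T$, modulo $a$, in exactly two points, one on each of $v_1,v_2$ after translation. This forces every horizontal slice of $E_i+G$ to be a union of intervals whose endpoints lie on two fixed vertical lines, so the boundaries are vertical segments $\{x=s_i\}$ and $E_i$ are vertical strips within a period. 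Equality $P_H(E_1)+P_H(E_2)=2a$ then says that the horizontal boundary has $x$-projection of total length exactly $a$. Hence the tops and bottoms of $E_1$ and $E_2$ together form a single horizontal line modulo $b$, with each cell having height exactly $b$. Finally, Lemma~\ref{lm:43io0568}, applied to each $E_i$ with $h_i=|E_i|/b$, $P_H(E_i)=2h_i$, $P_V(E_i)=2b$, and $\bar E_i\supset$ a vertical segment of length $b$, identifies each closure as a translated rectangle $[0,h_i]\times[0,b]$. The rectangles are adjacent, with $h_1+h_2=a$, and their union has closure equal to a translate of $[0,a]\times[0,b]$.

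The main obstacle is the equality step: splitting $P_H(E_1)+P_H(E_2)=2a$ into $P_H(E_i)=2h_i$, with $h_i=|E_i|/b$, as required by Lemma~\ref{lm:43io0568}. I would try Lemma~\ref{lm:123333}: $|E_i|\le\frac14P_H(E_i)P_V(E_i)=\frac b2P_H(E_i)$ gives $P_H(E_i)\ge 2|E_i|/b=2h_i$. Summing yields $2a$, so equality holds for each $i$.
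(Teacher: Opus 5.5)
Your inequality argument and the decisive equality step are the paper's proof. Every horizontal line meets both $\bar E_1+G$ and $\bar E_2+G$, so \eqref{eq:438576} gives $P_V(E_1),P_V(E_2)\ge 2b$, and Lemma~\ref{lm:32323} gives $P_H(E_1)+P_H(E_2)\ge 2a$. In the equality case, Lemma~\ref{lm:123333} gives $P_H(E_i)\ge 2\abs{E_i}/b$, which with $P_H(E_1)+P_H(E_2)=2a$ forces $P_H(E_i)=2h_i$, $h_i=\abs{E_i}/b$; Lemma~\ref{lm:43io0568} is then applied to each cell. Delete the false start aiming at $P_H(E_1)+P_H(E_2)\ge 4a$.

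The middle paragraph of your equality discussion, however, contains steps that fail, and the last clause rests on them.
\begin{enumerate}
\item The boundary crossings of a horizontal line need not lie on $v_1,v_2$. These lines may run through the interiors of the cells (take $v_1=\{x=h_1/2\}$ when $E_1=(0,h_1)\times(0,b)$). So ``the boundaries are $\{x=s_i\}$'' is wrong.
\item More seriously, the horizontal boundary having $x$-projection of total length $a$ does not put the horizontal edges of $E_1$ and $E_2$ on a common line modulo $b$. Take $a=b=1$, $E_1=(0,\frac12)\times(0,1)$, $E_2=(\frac12,1)\times(\frac12,\frac32)$. These generate a tiling relative to $\Z^2$ that satisfies all hypotheses and has $\Per_{\ell_1}(\mathbf T)=3=a+2b$, so it is isoperimetric by Theorem~\ref{thm:key estimate}. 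Yet no pair of generators $E_1+g$, $E_2+g'$ has union closing up to a unit square.
\end{enumerate}

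Consequently, your derivation of the last clause (union equal to a translate of $[0,a]\times[0,b]$) is invalid. That clause cannot be proved as literally stated. The paper's own proof also stops at ``$E_1,E_2$ are rectangles up to translations'', i.e.\ full-height $h_i\times b$ rectangles with $h_1+h_2=a$. These are adjacent along vertical edges but possibly vertically offset, which is exactly what your final paragraph delivers, so stop there.

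To invoke Lemma~\ref{lm:43io0568} you also assert that $\bar E_i$ contains a vertical segment of length $b$. This does not follow directly from $v_i\subset\bar E_i+G$, since the line may be covered by pieces of several translates; the paper does not check it either. You can bypass it. Since $\abs{E_i}=\frac14 P_H(E_i)P_V(E_i)$, every inequality in the proof of Lemma~\ref{lm:123333} is an equality. This forces a single indecomposable component with $\abs{E_i}=\abs{R_i}$, so $E_i$ coincides a.e.\ with its bounding rectangle $R_i$, of sides $h_i$ and $b$. Because $E_i$ is open, $\bar E_i=\bar R_i$.
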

\begin{proof}
By Lemma~\ref{lm:32323} we have 
\[
P_H(E_1)+P_H(E_2) \ge 2a.
\]
On the other hand, since every horizontal line crosses both vertical lines and hence 
crosses both sets $E_1$ and $E_2$, we have, using~\eqref{eq:438576},
\[
P_V(E_1)\ge 2b, \qquad P_V(E_2)\ge 2b.
\]
Hence 
\[
\Per_\phi(\mathbf T) = \frac{P_H(E_1)+P_H(E_2)+P_V(E_1)+P_V(E_2)}{2} \ge a + 2b.
\]

Suppose now that equality holds. This means that 
\[
P_V(E_1) = P_V(E_2) = 2b
\]
and 
\[
P_H(E_1) = 2h, \qquad P_H(E_2) = 2(a-h)
\]
for some $h\in [0,a]$. We also note, by Lemma \ref{lm:123333}, that
\[
    ab = \abs{E}= \abs{E_1}+ \abs{E_2} \le \frac14 (2h)(2b)  + \frac14 (2a-2h)(2b) = ab,
\]
hence, equality must occur in the above inequality, and thus $\abs{E_1}=hb,\abs{E_2}=(a-h)b$. Finally, we can invoke Lemma \ref{lm:43io0568} twice to deduce that $E_1,E_2$ are rectangles up to translations. This concludes the proof of the equality case.
\end{proof}
\section{Isoperimetric profiles}
In this section, we prove our main results around the isoperimetric profile functions.
\subsection{The case $\mathcal I_{G,\ell_1}(x)$}
Here we show the first main result, namely Theorem \ref{thm:main result}. To this aim, we carry out, in the next theorem, the main analysis.
\begin{theorem}\label{thm:key estimate}
Let $G$ be the rectangular lattice generated by two vectors $(a,0)$, $(0,b)$, $a,b>0$, and let us denote $D=[0,a]\times [0,b]$. Suppose that $\mathbf T$ is a $\ell_1$-isoperimetric double tiling relative to $G$ generated by $E_1, E_2$. 
Then, we have
\begin{equation}
 \Per_\phi(\mathbf T) \ge \min \ENCLOSE{a+2b, b+2a, a+b+2\sqrt{\abs{E_1}}, a+b+2\sqrt{\abs{E_2}}}.
\label{eq:isop tiling G}
\end{equation}
Finally, suppose that equality holds for some $\mathbf T$. 
Then, up to translations, one of the following occurs:
\begin{itemize}
    \item[{\rm o)}] $\mathbf T$ is degenerate (either $\abs{E_1}=0$ or $\abs{E_2}=0$) and generated by $D$;
    \item[{\rm i)}] $\mathbf T$ is generated by a square and the chipped rectangle $D$ (see Figure \ref{fig:small quadratino});
    \item[{\rm ii)}] $\mathbf T$ is generated by two adjacent rectangles whose union has closure equal to $D$ (see Figure \ref{fig:strip}).
\end{itemize}
\end{theorem}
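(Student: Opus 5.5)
We first reduce to regular generators. By Corollary \ref{cor:planar regularity} (and since the infimum value is the same), we may assume $E_1,E_2$ are open, bounded, with $\HH^1(\partial E_i\setminus\partial^*E_i)=0$. For the equality/classification part, any minimizer $\mathbf T$ attaining equality can first be replaced, via the construction \eqref{eq:decomposition} in Theorem \ref{thm:existence}, by one with bounded generators; Theorem \ref{th:regularity} then gives open representatives. We also dispose of the degenerate case at the outset: if $\abs{E_1}=0$, then $E_2$ is a fundamental domain, and Lemma \ref{lm:32323} gives $\Per_{\ell_1}\ge a+b$, which is the value $a+b+2\sqrt{\abs{E_1}}$. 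In the equality case, Lemma \ref{lm:43io0568} (applied with a vertical segment in $\bar E_2$, which exists since $\pi_2(E_2)$ has length $b$) identifies $E_2$ with $D$ up to translation, giving case o).

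The core is a case distinction according to which lines are contained in the closed periodic sets $\bar E_i+G$. Since $(\bar E_1+G)\cup(\bar E_2+G)=\R^2$ and each line meets the boundary network, every line either meets both families or is contained in one closure family. We split as follows.

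\textbf{Case A.} Every horizontal and every vertical line touches $\bar E_2+G$. Proposition \ref{prop:326576} gives the bound $a+b+2\sqrt{\abs{E_1}}$.

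\textbf{Case A$'$.} The same holds with $E_1$ in place of $E_2$, giving the bound $a+b+2\sqrt{\abs{E_2}}$.

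\textbf{Case B.} Otherwise, some line, say a vertical one, avoids $\bar E_2+G$, so it is contained in $\bar E_1+G$; similarly, some line avoids $\bar E_1+G$. We then sub-divide:
\begin{enumerate}
\item If that second line is also vertical, it is contained in $\bar E_2+G$, and Lemma \ref{prop:39899} yields $a+2b$.
\item If the second line is horizontal, we have a vertical line $v\subset \bar E_1+G$ and a horizontal line $w\subset\bar E_2+G$. These meet at a point of $\bar E_1\cap \bar E_2$ (mod $G$). The plan here is to show this configuration costs at least $a+b+\min$ of the other quantities, or is impossible for a minimizer. Every horizontal line crosses $v$, and every vertical line crosses $w$. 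Hence $P_V(E_1)\ge 2b$ and $P_H(E_2)\ge 2a$ by \eqref{eq:438576}, and Lemma \ref{lm:32323}-type counting should give the additional contributions $P_H(E_1)+P_V(E_2)>0$. I expect to get $\Per\ge a+b+\tfrac12(P_H(E_1)+P_V(E_2))$. Combining with the area bound of Lemma \ref{lm:123333}, $\abs{E_1}\le \tfrac14 P_H(E_1)P_V(E_1)$ is not directly enough, so instead I would show strict inequality against $\min\{a+2b,b+2a\}$. The tentative argument: $E_1\supset$ a neighborhood strip structure forces $\bar E_1+G$ to contain a vertical line, while $E_2+G$ contains a horizontal line crossing it, which contradicts the tiling being disjoint unless the crossing is at boundary points only. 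A line contained in the closure of a union of open tiles, crossed transversally, yields a boundary segment of positive length; this should force $P_H(E_1)\ge 2a$ or $P_V(E_2)\ge 2b$, giving $\ge a+2b$ or $2a+b$.
\end{enumerate}
The remaining sub-case, where every line meets both closures but not every line meets $\bar E_2$, is exactly covered by the first alternative of Case B.

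For equality, each case carries its rigidity statement. In Case A, Proposition \ref{prop:326576} gives the square plus chipped rectangle, i.e. case i). In Case B(1), Lemma \ref{prop:39899} gives two adjacent rectangles, i.e. case ii). In Case B(2) we must show equality is impossible or reduces to the previous configurations.

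The main obstacle is Case B(2), the mixed vertical/horizontal configuration. Making the crossing argument rigorous requires care, since the two lines only lie in closures, and there we must extract a genuine quantitative perimeter gain.
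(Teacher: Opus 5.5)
Your overall architecture is the paper's. You reduce to open bounded generators via Corollary~\ref{cor:planar regularity}, split according to which lines touch $\bar E_1+G$ or $\bar E_2+G$, and conclude with Proposition~\ref{prop:326576} and Lemma~\ref{prop:39899} together with their equality statements.

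\textbf{The gap is Case B(2).} You leave it open. Your sketch (``should force $P_H(E_1)\ge 2a$ or $P_V(E_2)\ge 2b$'') is unsupported, and its starting claim, that $v$ and $w$ meet at a point of $\bar E_1\cap\bar E_2$ modulo $G$, contradicts your own setup. In Case B you chose $v$ \emph{disjoint} from $\bar E_2+G$ and $w$ \emph{disjoint} from $\bar E_1+G$; you only later weakened this to mere containment. A vertical and a horizontal line always meet, so their intersection point would lie outside $(\bar E_1+G)\cup(\bar E_2+G)$. You noted yourself that this union is $\R^2$: both sets are closed (locally finite unions of compact sets) and contain the dense set $(E_1\cup E_2)+G$. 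Hence Case B(2) is empty. No quantitative perimeter gain is needed, and Case B reduces entirely to B(1).

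The paper avoids the mixed configuration by a nested dichotomy instead. Given a vertical line $v_1\subset\bar E_1+G$, there are two options:
\begin{itemize}
\item[1.] Every vertical line touches $\bar E_1+G$. Then every horizontal line does too, since it crosses $v_1$, and Proposition~\ref{prop:326576} applies with $E_1,E_2$ swapped.
\item[2.] Some vertical line lies in $\bar E_2+G$, and Lemma~\ref{prop:39899} applies.
\end{itemize}
With the one-line observation above, your split is complete and a bit more streamlined than the paper's. For the equality part, note that $P=\min\{\dots\}$ forces equality in the specific bound of the case at hand, so the rigidity statements apply.

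\textbf{A smaller issue is your degenerate-case argument.} A full vertical segment of length $b$ inside $\bar E_2$ does not follow from $\HH^1(\pi_2(E_2))=b$, so your appeal to Lemma~\ref{lm:43io0568} is not justified as written. It is also unnecessary. If $\abs{E_1}=0$, then $\bar E_2+G=\R^2$ and Case A holds trivially. The rigidity part of Proposition~\ref{prop:326576} then applies: $E_2$ is connected by Lemma~\ref{lm:connected}, and $E_2\subset I\times J$ with $\abs{I\times J}=ab=\abs{E_2}$. This gives $E_2=D$ up to translation, i.e.\ case o), exactly as in the paper.
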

\begin{proof}
To show \eqref{eq:isop tiling G}, we can equivalently assume that $\mathbf T$ is the $\ell_1$-isoperimetric double tiling given by Corollary \ref{cor:planar regularity} with same prescrived areas (being its perimeter the same), and in particular we can assume that each generator $E_i$ of $\mathbf T$ has an open and bounded representative satisfying $\HH^1(\partial E_i\setminus \partial^*E_i)=0$ for $i=1,2$. We subdivide the proof into multiple sub-cases, and we write $P\coloneqq \Per_\varphi(\mathbf T)$ for brevity.\medskip

\noindent\textsc{Case 1}: suppose every horizontal line and every vertical line touches $\bar E_2 + G$.
Then Proposition~\ref{prop:326576} applies and we obtain 
$P \ge a+ b + 2 \sqrt{\abs{E_1}}$. \medskip 

\noindent\textsc{Case 2}: Suppose every horizontal line and every vertical line touches $\bar E_1 +G$.
Then exchanging $E_1$ and $E_2$ we reduce to Case 1. Hence we obtain 
$P \ge a+ b + 2 \sqrt{\abs{E_2}}$.\medskip 

\noindent\textsc{Case 3}: Suppose Case 1 does not hold. We have two subcases: 3.1 and 3.2.\medskip 

\noindent\textsc{Case 3.1}: Suppose there exists a vertical line $v_1$ not touching $E_2$.
This means that $v_1\subset \bar E_1+G$.\medskip 

\noindent\textsc{Case 3.1.1}: Suppose every vertical line touches $\bar E_1+G$.
Every horizontal line touches $v_1$ (from Case 3.1) and hence touches $\bar E_1+G$.
So Proposition~\ref{prop:326576} applies with $E_1$ and $E_2$ swapped, giving $P\ge a+b+2\sqrt{\abs{E_2}}$.\medskip 

\noindent\textsc{Case 3.1.2}: Suppose 3.1.1 does not apply.
Then, there exists a vertical line not touching $\bar E_1+G$, which means 
that it is entirely contained in $\bar E_2 +G$.
Then Proposition~\ref{prop:39899} applies giving $P\ge a+2b$.\medskip 

\noindent\textsc{Case 3.2}: Suppose there exists a horizonal line not touching $E_2$.
The same subcase analysis performed in Case 3.1 (swapping $a$, $b$) gives $P\ge \min\ENCLOSE{2a+b,a+b+2\sqrt{\abs{E_2}}}$.\medskip 

\noindent\textsc{Case 4}: Suppose Case 2 does not hold. In this case we swap $E_1$ and $E_2$
and reduce to Case 3.\medskip 

\noindent\textsc{Conclusion}: We note that, combining all the cases, we have deduced the isoperimetric estimate \eqref{eq:isop tiling G}.  Therefore, to conclude the proof, we only need to characterize the equality case, and show that one of the conclusions o),i),ii) holds. We shall first characterize the equality case for the isoperimetric tiling $\mathbf T$ given by Corollary \ref{cor:planar regularity}, so that each of the previous cases applies. Then, we explain how to conclude the proof.

By symmetry, we shall suppose that $a\ge b$, and $\abs{E_1}\le \abs{E_2}$. 
Hence either $P = a+2b$ or $P = a+b+2\sqrt{\abs{E_1}}$. 
Suppose we are in Case 1: in this case, the equality conclusion in Proposition~\ref{prop:326576} yields conclusion i), if $\abs{E_1}$ is strictly positive or conclusion o) if $\abs{E_1}=0$.
Case 2 cannot happen because we have assumed $\abs{E_1}\le \abs{E_2}$ and $P\le a+ b + 2\sqrt{\abs{E_1}}$ by the minimality assumption.
Case 3.1.1 cannot happen for the same reason, so suppose that Case 3.1.2 holds. 
In this case $P=a+2b$, and the equality conclusion in Proposition~\ref{prop:39899}  yields conclusion ii).
Finally, Case 3.2 follows by the very same argument, as well as Case 4. All the cases have been analyzed; hence the tiling $\mathbf T$ given by Corollary \ref{cor:planar regularity} satisfies one of the properties o),i),ii).

Finally, if $\mathbf T$ is an arbitrary isoperimetric 2-tiling relative to $G$, then we recall that the proof of Theorem \ref{thm:existence} builds another isoperimetric 2-tiling $\mathbf{T}'$ with essentially bounded generators by possibly countable many translations (cf.\ \eqref{eq:decomposition}), and Corollary \ref{cor:planar regularity} deduces the regularity results precisely for such tiling $\mathbf T'$. As the conclusion applies to $\mathbf{ T}'$, one of the properties o),i),ii) must hold. Consequently, the original tiling $\mathbf T$ must also satisfy one of o),i),ii) by recalling again the construction of $\mathbf{ T}'$ via \eqref{eq:decomposition}.
\end{proof}
We are finally ready to show our first main result.
\begin{proof}[Proof of Theorem \ref{thm:main result}]
    The proof directly follows from Theorem \ref{thm:key estimate}. We observe that $4\sqrt{x} + 2a + 2b$ is precisely the sum of the $\ell_1$-perimeters of an axis-aligned square of area $x$ and of the chipped rectangle as in Figure \ref{fig:small quadratino}. The sum of the $\ell_1$-perimeter of two adjacent rectangles as in Figure \ref{fig:strip}  is given by $2a+4b$. In particular, if $x < \frac{b^2}{4}$ then $ 4\sqrt{x} + 2a + 2b <  2a+4b$, while the opposite holds if $\frac{b^2}{4} < x \le \frac 12$. Taking into account Theorem \ref{thm:key estimate}, we thus obtain the explicit value of $\mathcal I_{G,\ell_1}(x)$ for all $x\in[0,1]$. Finally, the conclusions o),i),ii) are in turn obtained by that of Theorem \ref{thm:key estimate}.
\end{proof}
\subsection{The case $\mathcal I_{\ell_1}(x)$}
In this part, we show our second main result when we further minimize among all possible lattices.
\begin{proof}[Proof of Theorem \ref{thm:pitagora}]
    Thanks to the Wulff isoperimetric inequality \eqref{eq:isoperimetrica quadrata}, we directly deduce 
    \[
        \mathcal I_{\ell_1}(x)\ge 2\sqrt x + 2\sqrt{1-x}.
    \]
    However, the Pythagorean double tiling (cf.\ Figure \ref{fig:pitagora}) where the two squares, with edges parallel to the coordinate axes, have respectively area $x$ and $(1-x)$ is a competitor, whence equality must occur.

    Let us now prove that in the case $x\neq \frac 1 2$, the 
    Pythagorean tilings are the only possible minimizers.
    Without loss of generality, we suppose $x< \frac 1 2$.
    Suppose that $\mathbf T$ is an $\ell_1$-isoperimetric double tiling and denote $G$ its lattice. 
    Namely, $\mathbf T$ is periodic relative to $G$ and satisfies $\Per_{\ell_1}(\mathbf T) = 2\sqrt x + 2\sqrt{1-x}$. 
    Then, by the equality case in the Wulff isoperimetric inequality \eqref{eq:isoperimetrica quadrata}, we deduce that the two generators $E_1, E_2$ are respectively equivalent to two squares with edges parallel to the coordinate axes, and matching areas. 
    Clearly each edge of $E_2$ (the largest square) touches a translation of $E_2$ itself because if not, $E_2$ should be surrounded by translations of $E_1$ which is not possible since $\abs{E_1}$ is smaller than $\abs{E_2}$ in our assumptions. 
    Denote $a=\sqrt {1-x}$ be the side length of $E_2$.
    Hence $(a,h),(w,a)\in G$ for some $h\in[0,a)$ and 
    some $w\in [0,a)$. 
    The complement of $G + E_2$ is composed by rectangles of sides $w$ and $h$, 
    but $E_1$ is also a square hence $w=h=\sqrt x$ and 
    we obtain a Pythagorean double tiling.

    The remaining case is when $x=\frac 1 2$.
    In this case $E_1$ and $E_2$ are squares with 
    the same side length $a=\frac{1}{\sqrt 2}$.
    It is easy to conclude that the squares,
    having the edges aligned with the coordinate axes,
    must fill the plane in horizontal or vertical stripes,
    as claimed in the statement.
\end{proof}
\section{Locally $\ell_1$-isoperimetric partitions}
In this section, we prove the main result Theorem \ref{thm:locally isop}. We first give the definition of a locally $\phi$-isoperimetric partition for a general anisotropic norm $\phi$. 
\begin{definition}[locally $\phi$-isoperimetric partition]\label{def:locally anisotropic isop}
    Let $\phi$ be a norm in $\R^d$, and let $(E_n)$ be a Borel partition (i.e.\ $|E_i\cap E_i|=0$ for $i\neq j$, and $|\R^d\setminus \cup_n E_n|=0$). We say that $(E_n)$ is locally $\phi$-isoperimetric if for all $R>0$ and for all Borel partitions $(F_n)$ satisfying 
    \[
        |E_n \cap Q_R|=|F_n\cap Q_R|,\qquad E_n\triangle F_n \Subset Q_R,
    \]
    where we denoted $Q_R\coloneqq (-R,R)^d$, it holds that
    \[
        \sum_{n\in\N} \Per_\phi(E_n,Q_R)\le \sum_{n\in\N}\Per_\phi(F_n,Q_R).
    \]
\end{definition}
Next, we show a useful local minimality property, and we lastly prove our main result.
\begin{lemma}\label{lm:51fdfg}
Let $G$ be a lattice and let $\phi$ be a norm in $\R^d$. 
Let $\mathbf T$ be a $\phi$-isoperimetric $2$-tiling relative to $G$.
Let $\Omega\subset \R^2$ be an open set such that $\bar \Omega \cap (\overline\Omega +g)=\emptyset$ for every $g\in G\setminus\{0\}$.
Suppose that there exists $E_1\in \mathbf T$ such that $E_1\Subset \Omega$
and let $E_2$ be a second generator.
Let $g_n$, $n\in \N$ be an enumeration of $G$
Then the partition 
\[
   (E_1, E_2 + g_1, \dots, E_2 + g_n, \dots)
\]
is a volume constrained minimizer of the $\phi$-perimeter relative to $\Omega$ (cf.\ Definition \ref{def:local minimal}).
\end{lemma}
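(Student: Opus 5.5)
The plan is to mimic the proof of Lemma~\ref{lem:local minimality}, with one change. The cell $E_1$ is kept as a single element of the partition, and not indexed by all its $G$-translates. This works because $E_1\Subset\Omega$ while $\bar\Omega$ is disjoint from its nonzero translates, so every translate $E_1+g$ with $g\neq 0$ lies outside $\Omega$.

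\textbf{Step 1 (setup).} Fix an open bounded $B\subset\Omega$ and a competitor partition $(F_1,F_{2,1},F_{2,2},\dots)$ with $|F\cap\Omega|$ matching and symmetric differences compactly contained in $B$. Choose a fundamental domain $D\supset\Omega$ such that $\Per_\phi(E_i,\partial D)=0$, exactly as in Lemma~\ref{lem:local minimality}; this is possible by the separation hypothesis on $\bar\Omega$, after a small translation of $D$. Since $E_1\Subset\Omega\subset D$, we may take $E_1$ itself as a generator inside $D$.

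\textbf{Step 2 (periodic competitor).} Define
\[
F_1' \coloneqq F_1,\qquad F_2' \coloneqq \Big(E_2\setminus \bigcup_{n}(\Omega-g_n)\Big)\cup\bigcup_n\big(F_{2,n}-g_n\big).
\]
The $E_1$ piece needs no reassembly, because its only copy meeting $\Omega$ is $E_1$ itself. Next:
\begin{itemize}
\item Because the symmetric differences are compactly supported in $B\subset\Omega$, the set $F_1'\cup F_2'$ differs from a fundamental domain only by a rearrangement inside $\Omega$, so it is still a fundamental domain.
\item The volume constraints $|F_1\cap\Omega|=|E_1\cap\Omega|$ and $|F_{2,n}\cap\Omega|=|(E_2+g_n)\cap\Omega|$ give $|F_1'|=|E_1|$ and $|F_2'|=|E_2|$.
\item Only finitely many $n$ matter, since $B$ is bounded and $F_{2,n}=E_2+g_n$ outside $B$. (If needed, invoke local finiteness or simply the boundedness of $B$.)
\end{itemize}
Hence $\{F_i'+g\}$ is an admissible double tiling relative to $G$.

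\textbf{Step 3 (comparison).} By isoperimetry of $\mathbf T$,
\[
0\le \tfrac12\sum_i\big(\Per_\phi(F_i')-\Per_\phi(E_i)\big).
\]
Outside $\bigcup_n(\Omega-g_n)$ the sets coincide. Moreover, the choice of $D$ ensures that no extra perimeter is created on $\partial\Omega$-translates, because $F$ equals $E$ near $\partial B$. So the difference localizes to
\[
\Per_\phi(F_1,\Omega)-\Per_\phi(E_1,\Omega)+\sum_n\big(\Per_\phi(F_{2,n},\Omega)-\Per_\phi(E_2+g_n,\Omega)\big),
\]
using translation invariance of $\Per_\phi$.

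Some care is needed with the factor $\frac12$ versus the full sum. In Lemma~\ref{lem:local minimality} this was handled by the same bookkeeping: each interface inside $\Omega$ is counted in exactly two cells of the partition, which matches the doubled count of $\sum_i\Per_\phi$ over generators restricted to the translates of $\Omega$. I would replicate that inequality chain verbatim. Since everything agrees outside $B$, replacing $\Omega$ by $B$ in the final inequality gives the claim.

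\textbf{Main obstacle.} The delicate point is verifying that $F_1'\cup F_2'$ is a genuine fundamental domain and that the perimeter splits without boundary terms on $\partial\Omega$. I expect both to follow from $E\triangle F\Subset B\Subset\Omega$ together with the disjointness of the closures $\bar\Omega+g$.
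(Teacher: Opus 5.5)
Your proposal is correct and is essentially the paper's proof. You keep the competitor $F_1$ of $E_1$ as first generator, reassemble the modified translates $F_{2,n}$ into a new second generator of the same area, and use the isoperimetry of $\mathbf T$ to localize the perimeter difference to $\Omega$ and then to $B$, as in Lemma~\ref{lem:local minimality}; the factor $\tfrac12$ simply cancels from $0\le\tfrac12(\cdots)$, so no interface bookkeeping is needed.

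Two small repairs are needed. First, define $F_2'$ using $\bigcup_n\big((F_{2,n}\cap\Omega)-g_n\big)$. The bare union $\bigcup_n(F_{2,n}-g_n)$ contains all of $E_2$, because each $F_{2,m}-g_m$ agrees with $E_2$ off $B-g_m$, and so it may overlap $F_1$. Second, drop the claim that only finitely many $n$ matter. It would need local finiteness, which is not assumed, and it is unnecessary since $\sum_n\Per_\phi(E_2+g_n,\Omega)\le\Per_\phi(E_2)<\infty$.
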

\begin{proof}
Consider a fundamental domain $D\supset \Omega$,\ i.e.\ $\R^d = \bigcup_g (D+g)$, $(D+g)\cap D =\emptyset$
for all $g\in G\setminus\{0\}$.
Suppose also that $\Per_\phi (E_1,\partial D) = \Per_\phi(E_2,\partial D) = 0$. Define 
\[
   \tilde E_1 = F, \qquad 
   \tilde E_2 = \bigcup_n (F_n\cap (D - g_n)),
   \qquad \tilde {\mathbf T} = \{\tilde  E_1+g\colon g\in G\} \cup \{ \tilde E_2+g\colon g\in G\}
\]
so that $\tilde {\mathbf T}$ is a 2-tiling relative to $G$.
We observe that since $|\tilde E_1| = |E_1|$ then
\[
    |\tilde E_2| = |D \setminus \tilde E_1| = | D \setminus E_1 | = |E_2|.
\]
Using that $\mathbf T$ is $\phi$-isoperimetric relative to $G$ and by similar arguments as already faced in the proof of Lemma \ref{lem:local minimality}, we get
\begin{align*}
    0&\le  2\Per_\phi(\mathbf{\tilde T}) - 2\Per_\phi(\mathbf T)=  \Per_\phi(F)- \Per_\phi(E_1) + \sum_n \Per_\phi(F_n,\Omega) -  \sum_n \Per_\phi(E_2+g_n,\Omega),
\end{align*}
which is the desired conclusion.
\end{proof}
\begin{proof}[Proof of Theorem \ref{thm:locally isop}]
Let $R>0$ be given such that $E_1 \Subset Q_R=(-R,R)^2$.
Consider the group $G=2R\cdot \Z^2$ generated by $g_1 = (2R,0)$, $g_2=(0,2R)$
and let $\mathbf T$ be a double tiling relative to $G$ 
generated by $E_1$ and $\tilde E_2 := [0,2R]^2 \setminus (E_1 + G)$.
By Theorem \ref{thm:main result}, we know that $\mathbf T$ is $\ell_1$-isoperimetric relative to $G$ up to choosing $R$ big enough.
By Lemma~\ref{lm:51fdfg} we also know that the partition $\mathbf E$ is a volume constrained minimizer of the $\ell_1$-perimeter relative to $Q_R$. Since $R>0$ was arbitrary, we conclude that the partition $\mathbf E$ is locally $\ell_1$-isoperimetric according to Definition \ref{def:locally anisotropic isop}.

The second part of the statement (about the \emph{vortex} partition) can be obtained by the 
same argument invoking instead Theorem~\ref{thm:pitagora}.
\end{proof}

\medskip

\noindent\textbf{Acknowledgments}. 
All authors are members of INDAM-GNAMPA and acknowledge support from the MIUR Excellence Department Project awarded to the Department of Mathematics, University of Pisa, CUP I57G22000700001.

F.N acknowledges support by the European Union (ERC ConFine, 101078057).

M.N. acknowledges partial support from the PRIN 2022 project 2022E9CF89, and the PRIN 2022 PNRR project P2022WJW9H.

E.P. acknowledges support from the PRIN 2022 project 2022PJ9EFL, and from the project PRA 2022 14 GeoDom (Università di Pisa).

F.N. and E.P. acknowledge support from the INdAM-GNAMPA Project ``Analisi e Gamma-convergenza per alcuni funzionali non locali'' CUP E53C25002010001\#.

\newcommand{\etalchar}[1]{$^{#1}$}

\end{document}